\definecolor{darkblue}{rgb}{0,0,0.6}
\crefname{lem}{Lemma}{Lemmas}
\crefname{prop}{Proposition}{Propositions}
\crefname{rem}{Remark}{Remarks}
\DeclareRobustCommand{\SkipTocEntry}[5]{}
\newcommand*\cocolon{%
	\nobreak
	\mskip6mu plus1mu
	\mathpunct{}%
	\nonscript
	\mkern-\thinmuskip
	{:}%
	\mskip2mu
	\relax
}
\numberwithin{equation}{section}
\theoremstyle{definition}
\newtheorem{defn}[equation]{Definition}
\newtheorem{ex}[equation]{Example}
\newtheorem{const}[equation]{Construction}
\newtheorem{rem}[equation]{Remark}
\newtheorem{theorem}[equation]{Theorem}
\newtheorem{prop}[equation]{Proposition}
\newtheorem{lem}[equation]{Lemma}
\newtheorem{cor}[equation]{Corollary}
\renewcommand{\epsilon}{\varepsilon}
\renewcommand{\theta}{\vartheta}
\renewcommand{\phi}{\varphi}
\renewcommand{\tilde}{\widetilde}
\newcommand{\cA}{\mathcal{A}}
\newcommand{\cB}{\mathcal{B}}
\newcommand{\cC}{\mathcal{C}}
\newcommand{\cD}{\mathcal{D}}
\newcommand{\cK}{\mathcal{K}}
\newcommand{\cO}{\mathcal{O}}
\newcommand{\cP}{\mathcal{P}}
\newcommand{\cU}{\mathcal{U}}
\newcommand{\cX}{\mathcal{X}}
\newcommand{\cY}{\mathcal{Y}}
\DeclareMathOperator{\Alg}{Alg}
\DeclareMathOperator{\Ar}{Ar}
\DeclareMathOperator{\con}{const}
\DeclareMathOperator{\Nat}{Nat}
\DeclareMathOperator{\St}{St}
\DeclareMathOperator{\Un}{Un}
\DeclareMathOperator{\Fun}{Fun}
\DeclareMathOperator{\Hom}{Hom}
\DeclareMathOperator{\id}{id}
\DeclareMathOperator{\inc}{inc}
\DeclareMathOperator{\Mon}{Mon}
\newcommand{\st}{\mathrm{st}}
\newcommand{\Cart}{\mathrm{Cart}}
\newcommand{\cat}{\mathrm{Cat}}
\newcommand{\catinf}{\mathrm{Cat}}
\newcommand{\catst}{\catinf^{\st}}
\newcommand{\Cocart}{\mathrm{Cocart}}
\newcommand{\LFib}{\mathrm{LFib}}
\newcommand{\RFib}{\mathrm{RFib}}
\newcommand{\op}{\mathrm{op}}
\newcommand{\Spc}{\mathrm{An}}
\newcounter{commentcounter}
\newcommand{\Comm}{\mathrm{Comm}}
\newcommand{\Bifib}{\mathrm{Bifib}}
\newcommand{\CurvOrtho}{\mathrm{CurvOrtho}}
\newcommand{\Ortho}{\mathrm{Ortho}}
\newcommand{\bicart}{\mathrm{bicart}}
\newcommand{\cart}{\mathrm{cart}}
\newcommand{\cocart}{\mathrm{cocart}}
\newcommand{\Day}{\mathrm{Day}}
\newcommand{\lax}{\mathrm{lax}}
\newcommand{\lslice}[3]{#1_{#2 /\hspace{-.5ex}/ #3}}
\newcommand{\Op}{\mathrm{Op}}
\newcommand{\oplax}{\mathrm{opl}}
\DeclareMathOperator{\ev}{ev}
\DeclareMathOperator{\Fr}{Fr}
\DeclareFontFamily{U}{min}{}
\DeclareFontShape{U}{min}{m}{n}{<-> dmjhira}{}
\newcommand{\yo}{\text{\usefont{U}{min}{m}{n}\symbol{'110}}}
\title{Hinich's model for Day convolution revisited}
\author{Christoph Winges}
\address{Fakult\"at f\"ur Mathematik, Universit\"at Regensburg, 93040 Regensburg, Germany}
\email{christoph.winges@ur.de}
\date{}
\subjclass{18N70, 18N60}
\keywords{Day convolution, oplax arrow category, orthofibration}
\begin{document}

\begin{abstract}
    We prove that Hinich's construction of the Day convolution operad of two $\cO$-monoidal $\infty$-categories is an exponential in the $\infty$-category of $\infty$-operads over $\cO$, and use this to give an explicit description of the formation of algebras in the Day convolution operad as a bivariant functor.
\end{abstract}

\maketitle

\section{Introduction}

Let $\cO^\otimes$ be an $\infty$-operad and consider operad maps $p \colon \cC^\otimes \to \cO^\otimes$ and $q \colon \cD^\otimes \to \cO^\otimes$.
If it exists, the \emph{Day convolution} of $p$ and $q$ is an exponential of $q$ by $p$ in the $\infty$-category of $\infty$-operads over $\cO^\otimes$, ie a right adjoint object to $q$ with respect to the product functor $- \times_{\cO^\otimes} \cC^\otimes \colon (\Op_\infty)_{/\cO^\otimes} \to (\Op_\infty)_{/\cO^\otimes}$.
Using the combinatorial machinery of quasicategories, Lurie gives a very general construction of Day convolution operads in \cite[Section~2.2.6]{HA};
see also \cite[Section~2.8]{hinich:enriched-yoneda}.

For practical purposes, it is often sufficient to know that the Day convolution operad exists in the case that $\cC^\otimes$ and $\cD^\otimes$ are $\cO$-monoidal categories.
In the case of symmetric monoidal $\infty$-categories, Glasman gives a concrete description of the Day convolution operad in \cite{glasman:day}.
Hinich provides in \cite{hinich:enriched-yoneda} a rather straightforward description of the Day convolution operad for arbitrary $\cO$-monoidal $\infty$-categories, and it is this model we will focus on in the sequel.

To facilitate Hinich's description, recall that there are equivalences of $\infty$-categories
\[ \Alg_\cO(\catinf_\infty) \xrightarrow{\sim} \Mon_\cO(\cat_\infty) \simeq \Cocart_\otimes(\cO). \]
Here, $\Cocart_\otimes(\cO)$ denotes the $\infty$-category of cocartesian fibrations of $\infty$-operads over $\cO^\otimes$.
The first equivalence is given by composition with the cartesian structure $\cat_\infty^\times \to \cat_\infty$ \cite[Proposition~2.4.2.5]{HA} and the second equivalence is induced by (un)straightening; this is implicit in \cite{HA} and explicitly spelled out in \cite[Proposition~A.2.1]{hinich:rectification}.
In the sequel, we will freely switch between these descriptions of $\cO$-monoidal $\infty$-categories as necessary.

Denote by $\Ar^\oplax$ the full subcategory of $(\cat_\infty)_{/[1]}$ spanned by the cartesian fibrations and equip it with the cartesian symmetric monoidal structure.
Restriction to $\{0\}$ and $\{1\}$ defines symmetric monoidal functors $t \colon \Ar^\oplax \to \cat_\infty$ and $s \colon \Ar^\oplax \to \cat_\infty$, where we also equip $\cat_\infty$ with the cartesian symmetric monoidal structure.

\begin{theorem}[{\cite[Section~2.8.9]{hinich:enriched-yoneda}}]\label{thm:day}
    Let $\cC$ and $\cD$ be $\cO$-monoidal $\infty$-categories and denote by $\Day_{\cC,\cD}^\otimes$ the pullback
    \[\begin{tikzcd}
        \Day_{\cC,\cD}^\otimes\ar[r]\ar[d] & (\Ar^\oplax)^\times\ar[d, "{(s,t)}"] \\
        \cO^\otimes\ar[r, "{(\cC,\cD)}"] & \cat_\infty^\times \times_{\Comm^\otimes} \cat_\infty^\times
    \end{tikzcd}\]
    Then $\Day_{\cC,\cD}^\otimes \to \cO^\otimes$ is the Day convolution of $\cC^\otimes$ and $\cD^\otimes$.
\end{theorem}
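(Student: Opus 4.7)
The plan is to verify the defining universal property of the Day convolution directly. Concretely, for every $\infty$-operad $q\colon \cE^\otimes \to \cO^\otimes$ over $\cO^\otimes$, I would exhibit a natural equivalence
\[
\Map_{(\Op_\infty)_{/\cO^\otimes}}\!\bigl(\cE^\otimes \times_{\cO^\otimes} \cC^\otimes,\; \cD^\otimes\bigr)
\;\simeq\;
\Map_{(\Op_\infty)_{/\cO^\otimes}}\!\bigl(\cE^\otimes,\; \Day_{\cC,\cD}^\otimes\bigr),
\]
which is the characterizing property of $\Day_{\cC,\cD}^\otimes$ as the exponential of $\cD^\otimes$ by $\cC^\otimes$ in $(\Op_\infty)_{/\cO^\otimes}$.

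First, I would unfold the right-hand side using the defining pullback. A lift of $q$ through $\Day_{\cC,\cD}^\otimes$ is precisely an operad map $F\colon \cE^\otimes \to (\Ar^\oplax)^\times$ equipped with an identification $(s,t) \circ F \simeq (\cC,\cD) \circ q$. Because $(\Ar^\oplax)^\times$ is cartesian symmetric monoidal, the operad map $F$ is determined by an underlying functor $\cE \to \Ar^\oplax$ together with the coherent multi-morphism data dictated by the operad structure of $\cE^\otimes$. Applying the description of $\Ar^\oplax$ as the full subcategory of $(\cat_\infty)_{/[1]}$ on cartesian fibrations, together with unstraightening over $[1]$, I would identify this with the datum of a cartesian fibration of $\infty$-operads $Z^\otimes \to \cE^\otimes \times [1]$ whose two endpoint restrictions are given, via the base compatibility, by the operadic pullbacks $\cE^\otimes \times_{\cO^\otimes} \cC^\otimes$ and $\cE^\otimes \times_{\cO^\otimes} \cD^\otimes$, in the order prescribed by the conventions for $s$ and $t$.

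Second, I would straighten $Z^\otimes$ over $[1]$ into a morphism $\cE^\otimes \times_{\cO^\otimes} \cC^\otimes \to \cE^\otimes \times_{\cO^\otimes} \cD^\otimes$ of $\infty$-operads over $\cE^\otimes$. Invoking the base-change adjunction along $q$ (that is, the identification $\Map_{(\Op_\infty)_{/\cE^\otimes}}(\cF^\otimes,\, \cE^\otimes \times_{\cO^\otimes} \cD^\otimes) \simeq \Map_{(\Op_\infty)_{/\cO^\otimes}}(\cF^\otimes,\, \cD^\otimes)$ valid for any operad $\cF^\otimes \to \cE^\otimes$), this morphism becomes an operad map $\cE^\otimes \times_{\cO^\otimes} \cC^\otimes \to \cD^\otimes$ over $\cO^\otimes$, matching the left-hand side. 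Naturality in $\cE^\otimes$ and the Yoneda lemma then complete the argument.

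The main obstacle is the first step: matching operad maps to $(\Ar^\oplax)^\times$ that are compatible with $(\cC,\cD) \circ q$ against cartesian fibrations of $\infty$-operads $Z^\otimes \to \cE^\otimes \times [1]$ with prescribed endpoints. The cartesian symmetric monoidal structure of $(\Ar^\oplax)^\times$ interacts with the pullback along $(\cC,\cD)$ in a way that does not reduce to a single application of straightening; one must carefully track the compatibility with the multi-morphism data of $\cE^\otimes$, using the cocartesian structure of $\cC^\otimes$ and $\cD^\otimes$ over $\cO^\otimes$ to ensure that the fiberwise arrows assemble correctly into the operadic product at each arity.
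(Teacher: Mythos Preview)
Your outline is essentially the paper's approach: unwind a map into $\Day_{\cC,\cD}^\otimes$ via the defining pullback, reinterpret the resulting operad map into $(\Ar^\oplax)^\times$ as a suitable fibration over $\cE^\otimes\times[1]$, straighten over $[1]$, and then base-change along $q$. The paper in fact proves the stronger categorified statement $\Alg_{\cA/\cO}(\Day_{\cC,\cD})\simeq\Alg_{\cA\times_\cO\cC/\cO}(\cD)$ and deduces the theorem by passing to groupoid cores, but the skeleton of the argument is the same.

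Where your proposal remains a sketch is precisely the step you flag as the ``main obstacle'', and this is where almost all of the paper's work goes. The paper does not try to define or manipulate a ``cartesian fibration of operads'' $Z^\otimes\to\cE^\otimes\times[1]$ directly; instead it uses the equivalence $\Fun(Y^\op,\Cocart^\lax(Z))^{\cart}\simeq\CurvOrtho(Y,Z)\simeq\Fun(Z,\Cart^\oplax(Y))^{\cocart}$ from \cite{hhln:lax-adjunctions} to interchange the two unstraightenings cleanly. Concretely, an $\cE$-monoid in $\Ar^\oplax$ is a functor $\cE^\otimes\to\Cart^\oplax([1])$ satisfying Segal conditions, and the curved-orthofibration equivalence converts this into a functor $[1]^\op\to\Cocart^\lax(\cE^\otimes)$; \cref{lem:identify-monoids} then pins down exactly which such functors arise, namely those whose values are cocartesian fibrations of operads and whose structure map preserves inerts. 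That lemma is the precise content of your ``carefully track the compatibility with the multi-morphism data'' step, and the free-fibration arguments (\cref{prop:free-cart}, \cref{prop:aropl-orthofib}) are what make the passage from $\Cart([1])$ to $\Ar^\oplax$ go through. So your plan is correct, but the paper's orthofibration framework is what turns it into a proof.
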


The goal of this note is to give an alternative proof of this theorem.
Instead of identifying $\Day_{\cC,\cD}^\otimes$ with another model for the Day convolution operad as in \cite[Section~6.3.9]{hinich:enriched-yoneda}, we verify directly that $\Day_{\cC,\cD}^\otimes$ possesses the correct universal property, also on the level of categories of algebras.
In fact, we show first that the construction of $\Day_{\cC,\cD}^\otimes$ promotes the assignment $(\cC,\cD) \mapsto \Alg_{/\cO}(\Day_{\cC,\cD})$ to a bivariant functor and exhibit a natural equivalence
\[ \Alg_{/\cO}(\Day_{\cC,\cD}) \simeq \Alg_{\cC/\cO}(\cD), \]
where the functoriality of the right hand side is given by pre- and postcomposition with operad maps.
The universal property of $\Day_{\cC,\cD}^\otimes$ follows easily from this.
This is done in \cref{sec:main}.

\cref{sec:variations} comments on some variations of these statements for $\cO$-monoidal $\infty$-categories living in certain suboperads of the cartesian symmetric monoidal structure on $\cat_\infty$.
\cref{sec:day-o-monoidal} reproves the well-known statement that $\Day_{\cC,\cD}^\otimes \to \cO^\otimes$ defines an $\cO$-monoidal $\infty$-category under suitable cocompleteness assumptions on $\cD$.


\addtocontents{toc}{\SkipTocEntry}
\subsection*{Conventions}
\begin{enumerate}
    \item In the remainder of this note, the word ``category'' means ``$\infty$-category''.
    We write $\catinf$ for the category of small categories.
    \item The category of anima/spaces/$\infty$-groupoids is denoted by $\Spc$.
    The groupoid core $\iota \colon \cat \to \Spc$ is the right adjoint to the inclusion of $\Spc$ into $\catinf$.
    \item Given a category $X$, we denote by $\Cocart(X)$ the subcategory of $\catinf_{/X}$ given by the cocartesian fibrations over $X$ and those functors over $X$ which preserve cocartesian morphisms.
    \item We also drop the prefix ``$\infty$-'' from related concepts.
    For example, ``operad'' means ``$\infty$-operad'' from now on, and we denote the category of operads by $\Op$.
    \item The notation concerning operads and categories of algebras follows the conventions of \cite{HA}. The symbol $\Comm^\otimes$ denotes the commutative operad (ie the category of pointed finite sets).
    \item Given an operad $\cA^\otimes$, we denote by $\Cocart_\otimes(\cA)$ the subcategory of $\Op_{/\cA}$ given by the cocartesian fibrations of operads over $\cA$, ie $\cA$-monoidal categories, and those functors over $\cA$ which preserve cocartesian morphisms (corresponding to $\cA$-monoidal functors).
\end{enumerate}

\addtocontents{toc}{\SkipTocEntry}
\subsection*{Acknowledgements}
I am grateful to Bastiaan Cnossen, Fabian Hebestreit and Sil Linskens for discussions and comments on earlier versions of this document.

The author was supported by CRC 1085 ``Higher Invariants" funded by the Deutsche Forschungsgemeinschaft (DFG).


\section{Algebras in the Day convolution operad}\label{sec:main}

Throughout this section, fix a base operad $\cO^\otimes$ as well as $\cO$-monoidal categories $\cC$ and $\cD$.
Our goal is to prove the following strengthening of \cref{thm:day}.

\begin{theorem}\label{thm:day-improved}
    Let $\alpha \colon \cA^\otimes \to \cO^\otimes$ be an operad over $\cO^\otimes$.
    Then there exists a natural equivalence
    \[ \Alg_{\cA \times_\cO \cC/\cO}(\cD) \simeq \Alg_{\cA/\cO}(\Day_{\cC,\cD}). \]
\end{theorem}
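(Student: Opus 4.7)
The plan is to chain together several natural equivalences unwinding the pullback definition of $\Day^\otimes_{\cC,\cD}$.

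First, by the pullback universal property, operadic maps $\cA^\otimes \to \Day^\otimes_{\cC,\cD}$ over $\cO^\otimes$ correspond to operadic maps $F\colon \cA^\otimes \to (\Ar^\oplax)^\times$ over $\Comm^\otimes$ equipped with identifications $s\circ F \simeq \cC\circ\alpha$ and $t\circ F \simeq \cD\circ\alpha$. Under the equivalence $\Alg_\cA(\cat_\infty^\times) \simeq \Cocart_\otimes(\cA)$, these identifications pin the induced ``source'' and ``target'' $\cA$-monoidal categories down to be $\cA\times_\cO \cC$ and $\cA\times_\cO \cD$ respectively.

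Second, I would interpret the data of such an $F$ via unstraightening. Since $\Ar^\oplax \subset (\cat_\infty)_{/[1]}$ is closed under products (pullback over $[1]$), $F$ corresponds to an $\cA$-monoidal category $\cF$ equipped with an $\cA$-monoidal functor $\cF \to [1]$ (with the constant $\cA$-monoidal structure on $[1]$) whose underlying functor is fiberwise a cartesian fibration. Straightening this fiberwise cartesian fibration along the non-identity morphism of $[1]$ yields a functor $\cA\times_\cO \cC \to \cA\times_\cO \cD$ over $\cA$. Because morphisms in $\Ar^\oplax$ need not preserve cartesian edges, the $\cA$-monoidal structure on $\cF$ produces the structure of a \emph{lax} $\cA$-monoidal functor on this functor, or equivalently, an operadic map $(\cA\times_\cO \cC)^\otimes \to (\cA\times_\cO \cD)^\otimes$ over $\cA^\otimes$.

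Third, the universal property of the pullback $\cA\times_\cO \cD$ in $\Op_{/\cO}$ identifies such maps with operadic maps $(\cA\times_\cO \cC)^\otimes \to \cD^\otimes$ over $\cO^\otimes$, which is $\Alg_{\cA\times_\cO \cC/\cO}(\cD)$ by definition. Concatenating these equivalences proves the theorem. The main obstacle lies in the second step: precisely identifying $\cA$-algebras in $(\Ar^\oplax)^\times$ with prescribed source and target with operadic maps between the corresponding $\cA$-monoidal categories, and doing so naturally in $\cA$ (which is needed for the subsequent bivariant functoriality claim). This requires a careful use of straightening/unstraightening in families along $\cA^\otimes$, together with a comparison of how the cocartesian transport over $\cA^\otimes$ interacts with the cartesian transport over $[1]$.
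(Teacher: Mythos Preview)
Your steps 1 and 3 are correct and align with the paper's reduction: it also passes through the identification $\Alg_{\cA/\cO}(\Day_{\cC,\cD}) \simeq \Alg_{/\cA}(\Day_{\alpha^*\cC,\alpha^*\cD})$ and then invokes the universal property of $\alpha^*\cD$ at the end. The crux is indeed your step 2, and you are right that this is where all the work lies.

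However, the specific formulation you give of step 2 does not go through. An $\cA$-algebra $F$ in $(\Ar^\oplax)^\times$ is an $\cA$-monoid with respect to the \emph{cartesian} structure on $\Ar^\oplax$, whose product is pullback over $[1]$. The Segal condition therefore reads
\[
 F(a_1 \boxtimes \ldots \boxtimes a_n) \;\simeq\; F(a_1) \times_{[1]} \ldots \times_{[1]} F(a_n),
\]
not the ordinary product. Since the forgetful functor $\Ar^\oplax \subseteq \catinf_{/[1]} \to \catinf$ does not preserve products, the unstraightened total category $\cF \to \cA^\otimes$ is \emph{not} a cocartesian fibration of operads: its fibres over $a_1 \boxtimes \ldots \boxtimes a_n$ are iterated fibre products over $[1]$, not products. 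Relatedly, the ``constant $\cA$-monoidal structure on $[1]$'' you invoke does not exist; the constant functor $\cA^\otimes \to \catinf$ at $[1]$ fails the Segal condition once $n \geq 2$. So the sentence ``$F$ corresponds to an $\cA$-monoidal category $\cF$ equipped with an $\cA$-monoidal functor $\cF \to [1]$'' is false as stated, and the subsequent ``fibrewise straightening over $[1]$'' does not yet have a well-defined input.

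The paper circumvents this by never trying to view $\cF$ itself as an $\cA$-monoidal category. Instead it encodes an $\cA$-monoid in $\Ar^\oplax$ via the orthofibration $(s,t) \colon \Ar^\oplax \to \catinf \times \catinf$: after showing this remains an orthofibration on $\cA$-monoids (\cref{prop:alg-s-t-orthofibration}), it identifies the straightening by a Yoneda-style argument using free orthofibrations and the equivalence $\CurvOrtho([1],\cA^\otimes \times I) \simeq \Fun([1]^\op,\Cocart^\lax(\cA^\otimes \times I))^\cocart$ (\cref{lem:identify-monoids}). In that language, your intuition becomes precise: an $\cA$-monoid in $\Ar^\oplax$ corresponds to a functor $E \colon [1]^\op \to \Cocart^\lax(\cA^\otimes)$ with $E(0),E(1)$ cocartesian fibrations of operads and $E(1) \to E(0)$ inert-preserving, i.e.\ an operad map. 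If you want to salvage your direct approach, this curved-orthofibration reformulation (rather than the incorrect slice interpretation) is the statement you need to prove in step 2.
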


Note that \cref{thm:day} follows from this by passing to groupoid cores.
The proof of this theorem is already implicit in \cite[Remark~5.2.5]{thenine-1}, but 
it will be convenient to formulate the argument in terms of some concepts introduced in
\cite{hhln:lax-adjunctions}.

\begin{defn}[{\cite[Observation~2.3.2]{hhln:lax-adjunctions}}]
 A functor $(p_1,p_2) \colon X \to Y \times Z$ is a \emph{curved orthofibration} if
 \begin{enumerate}
  \item $p_1 \colon X \to Y$ is a cartesian fibration;
  \item $p_2 \colon X \to Z$ is a cocartesian fibration;
  \item $p_1$-cartesian lifts of morphisms project to equivalences under $p_2$;
  \item $p_2$-cocartesian lifts of morphisms project to equivalences under $p_1$.
 \end{enumerate}
\end{defn}

Denote by $\Cocart^\lax(Y)$ the full subcategory of $\catinf_{/Y}$ spanned by the cocartesian fibrations, and by $\Cart^\oplax(Z)$ the full subcategory of $\catinf_{/Z}$ spanned by the cartesian fibrations.
In addition, $\CurvOrtho(Y,Z)$ denotes the subcategory of $\catinf_{/Y \times Z}$ whose objects are curved orthofibrations and whose morphisms are functors over $Y \times Z$ preserving both $p_1$-cartesian and $p_2$-cocartesian morphisms.
We will make use of the following description of $\CurvOrtho(Y,Z)$.

\begin{prop}[{\cite[Corollary~2.3.4]{hhln:lax-adjunctions}}]\label{prop:straighten-curved-ortho}
    Unstraightening over $Y$ and $Z$ induces equivalences
    \[ \Fun(Y^\op,\Cocart^\lax(Z))^\cart \simeq \CurvOrtho(Y,Z) \simeq \Fun(Z,\Cart^\oplax(Y))^\cocart \]
    which are natural in $Y$ and $Z$; the superscripts $\cocart$ and $\cart$ denote the wide subcategories on those natural transformations whose components all preserve (co)cartesian morphisms.
\end{prop}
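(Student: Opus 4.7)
The plan is to construct the first equivalence explicitly and obtain the second by a parallel argument. Given a functor $F \colon Y^\op \to \Cocart^\lax(Z)$, post-compose with the forgetful functor $\Cocart^\lax(Z) \to \catinf$ and apply cartesian unstraightening over $Y$ to obtain a cartesian fibration $p_1 \colon X \to Y$ whose fiber over $y$ is $F(y)$. The structure maps $F(y) \to Z$, being natural in $y \in Y^\op$, assemble into a functor $p_2 \colon X \to Z$, giving a candidate $(p_1, p_2) \colon X \to Y \times Z$.

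The main step is to verify that $p_2$ is a cocartesian fibration whose cocartesian lifts are precisely the fiberwise ones inside $F(p_1(x))$. The key observation is that any morphism in $X$ factors essentially uniquely as a fiberwise morphism followed by a $p_1$-cartesian lift, and the cartesian transport $F(y') \to F(y)$ encoded by the latter is by construction a morphism \emph{over $Z$}. Combining this factorization with the fiberwise cocartesian property yields the universal property of the candidate cocartesian lifts. Once this is established, the remaining curved orthofibration axioms are immediate: $p_2$-cocartesian lifts lie in a single $p_1$-fiber, so project to identities in $Y$; and $p_1$-cartesian lifts are realized by the cartesian transport, which lives over $Z$, so projects to an equivalence.

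For the inverse direction, cartesian straightening of $p_1$ produces $F \colon Y^\op \to \catinf$ with $F(y) = p_1^{-1}(y)$, and restriction of $p_2$ to each fiber promotes $F$ to a functor into $\catinf_{/Z}$. Each $F(y) \to Z$ is a cocartesian fibration because $p_2$-cocartesian lifts from a point in the fiber remain in the fiber, thanks to the axiom that such lifts project to equivalences in $Y$; hence $F$ factors through $\Cocart^\lax(Z)$. Under these assignments, a functor $X \to X'$ over $Y \times Z$ preserving $p_1$-cartesian morphisms corresponds to a natural transformation $F \to F'$ with components in $\catinf_{/Z}$; additionally requiring preservation of $p_2$-cocartesian morphisms is equivalent to each component being a map of cocartesian fibrations over $Z$, matching the restriction imposed by the $\cart$ decoration on $\Fun(Y^\op, \Cocart^\lax(Z))$.

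The principal obstacle is the pointwise verification that the fiberwise $F(y)$-cocartesian lifts are genuinely $p_2$-cocartesian in all of $X$; this uses the factorization above together with the explicit description of mapping anima coming from cartesian unstraightening. Naturality in $Y$ and $Z$ is automatic, since the construction is assembled from the naturality of the one-sided (co)cartesian straightening equivalences and of pullback. The second equivalence, with $\Fun(Z, \Cart^\oplax(Y))^\cocart$, follows by the symmetric construction that begins instead with cocartesian unstraightening of $p_2$; the mutual compatibility of both descriptions with $\CurvOrtho(Y,Z)$ gives the composite equivalence stated.
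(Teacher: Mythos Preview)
The paper does not give its own proof of this proposition; it is stated with a citation to \cite[Corollary~2.3.4]{hhln:lax-adjunctions} and used as a black box. There is therefore nothing in the present paper to compare your argument against.

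That said, your outline is a correct sketch of how one proves this, and is in the spirit of the argument in the cited reference. The verification you single out as the main step---that fiberwise cocartesian lifts in $F(y)$ are $p_2$-cocartesian in all of $X$---is indeed the heart of the matter, and your factorization argument handles it: using the description of mapping anima in a cartesian fibration (the fibre of $\Hom_X(a,b) \to \Hom_Y(p_1a,p_1b)$ over $g$ is $\Hom_{p_1^{-1}(p_1a)}(a, g^*b)$), together with the fact that the cartesian transport $g^*$ is a functor \emph{over $Z$} (so $g^*b$ sits over the same object of $Z$ as $b$), one reduces directly to the fiberwise cocartesian property in $F(p_1a)$. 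Note that this step does not require $g^*$ to preserve cocartesian morphisms, which is exactly why one lands in curved orthofibrations rather than orthofibrations.

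One small point worth making explicit in the inverse direction: the promotion of the straightened functor $\St(p_1) \colon Y^\op \to \catinf$ to a functor valued in $\catinf_{/Z}$ is obtained by straightening the map $(p_1,p_2) \colon X \to Y \times Z$, viewed as a morphism in $\Cart(Y)$ from $p_1$ to the projection $\pr_Y \colon Y \times Z \to Y$; since $\St(\pr_Y) \simeq \con_Z$, this gives a natural transformation $\St(p_1) \Rightarrow \con_Z$, which is precisely a lift to $\catinf_{/Z}$. With this in hand, the remainder of your argument goes through.
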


We will also require the following statement about the interaction of cartesian and cocartesian morphisms in curved orthofibrations.

\begin{lem}\label{lem:preserve-lifts}
    Let $(p_1,p_2) \colon X \to Y \times Z$ be a curved orthofibration, and let $g \colon y \to y'$ and $h \colon z \to z'$ be morphisms in $Y$ and $Z$, respectively.
    Then the following are equivalent:
    \begin{enumerate}
        \item The cartesian transport $g^* \colon p_1^{-1}(a') \to p_1^{-1}(a)$ along $g$ preserves $p_2$-co\-car\-te\-sian lifts of $h$.
        \item The cocartesian transport $h_! \colon p_2^{-1}(b) \to p_2^{-1}(b')$ along $h$ preserves $p_1$-car\-te\-sian lifts of $g$.
    \end{enumerate}
\end{lem}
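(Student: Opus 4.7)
The plan is to show that both conditions (1) and (2) are equivalent to a single Beck--Chevalley comparison map in a common fiber being an equivalence. To begin, I would fix an object $x \in X$ lying over $(y',z) \in Y \times Z$ and choose four lifts: a $p_1$-cartesian lift $\tilde g \colon g^*x \to x$ of $g$, a $p_2$-cocartesian lift $\tilde h \colon x \to h_! x$ of $h$, and further chosen lifts $\tilde g'' \colon g^* h_! x \to h_! x$ ($p_1$-cartesian over $g$) and $\tilde h''' \colon g^* x \to h_! g^* x$ ($p_2$-cocartesian over $h$). Conditions (3) and (4) of a curved orthofibration guarantee that $g^* h_! x$ and $h_! g^* x$ both lie in the common fiber $F \coloneqq p_1^{-1}(y) \cap p_2^{-1}(z')$, which is where the eventual comparison will take place.

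Next, I would extract the maps governing conditions (1) and (2) and connect them through a single morphism $\gamma \colon h_! g^*x \to g^* h_! x$ in $F$. The universal property of $\tilde g''$ produces a map $\alpha \colon g^*x \to g^* h_! x$ lying over $h$ with $\tilde g'' \circ \alpha \simeq \tilde h \circ \tilde g$; this $\alpha$ is, up to equivalence, the image of $\tilde h$ under cartesian transport along $g$, so condition (1) at $x$ is precisely the statement that $\alpha$ is $p_2$-cocartesian. Dually, the universal property of $\tilde h'''$ produces $\beta \colon h_! g^*x \to h_! x$ lying over $g$ with $\beta \circ \tilde h''' \simeq \tilde h \circ \tilde g$, identified with the cocartesian transport of $\tilde g$ along $h$, so that condition (2) at $x$ asserts that $\beta$ is $p_1$-cartesian. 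Factoring $\beta$ through the cartesian arrow $\tilde g''$ yields the desired map $\gamma$; the uniqueness clauses in the universal properties then force $\beta \simeq \tilde g'' \circ \gamma$ and $\alpha \simeq \gamma \circ \tilde h'''$.

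The conclusion is a short closure argument. Because $\tilde g''$ is $p_1$-cartesian and $\gamma$ lies in a fiber, $\beta = \tilde g'' \circ \gamma$ is $p_1$-cartesian if and only if $\gamma$ is an equivalence; symmetrically, $\alpha = \gamma \circ \tilde h'''$ is $p_2$-cocartesian if and only if $\gamma$ is an equivalence. Quantifying over all such $x$, both conditions thus reduce to the same assertion on $\gamma$ and are therefore equivalent. The step I expect to be the main obstacle is arranging the factorization arguments and the identification of $\alpha$ and $\beta$ with the composites through $\gamma$ coherently at the $\infty$-categorical level rather than only on homotopy categories; a cleaner alternative that avoids this bookkeeping is to invoke \cref{prop:straighten-curved-ortho} from the outset, so that both conditions unravel to the invertibility of the same canonical transformation between straightened transport functors.
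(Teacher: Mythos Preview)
Your argument is correct: reducing both conditions to the invertibility of the single Beck--Chevalley comparison $\gamma \colon h_!g^*x \to g^*h_!x$ is exactly the right mechanism, and your use of the cancellation properties of (co)cartesian morphisms (a composite $\tilde g'' \circ \gamma$ with $\tilde g''$ cartesian is cartesian iff $\gamma$ is, and $\gamma$ over an equivalence is cartesian iff it is invertible) is sound. The identification $\alpha \simeq \gamma \circ \tilde h'''$ via the universal property of $\tilde g''$ is also fine, since both sides lie over the identity of $y$ and have the same image after postcomposition with $\tilde g''$.

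By contrast, the paper gives no argument of its own: it simply refers the reader to the proof of \cite[Proposition~2.3.11]{hhln:lax-adjunctions} and asserts that the lemma can be read off from there. Your approach is therefore strictly more self-contained. The trade-off is the one you already flag: the coherence of the factorisations at the $\infty$-categorical level requires either working carefully with the relevant mapping-space fibre sequences (which is routine but not one line) or, as you suggest, passing through \cref{prop:straighten-curved-ortho}, where both conditions become the assertion that a single natural transformation of functors $Y^\op \to \Cocart^\lax(Z)$ (respectively $Z \to \Cart^\oplax(Y)$) has invertible components---and this is presumably closer to what the cited proof in \cite{hhln:lax-adjunctions} actually does.
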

\begin{proof}
    This follows by inspection of the proof of \cite[Proposition~2.3.11]{hhln:lax-adjunctions}.
\end{proof}

Given \cref{lem:preserve-lifts}, we can make the following definition, see \cite[Definition~2.3.10]{hhln:lax-adjunctions}.

\begin{defn}
    A curved orthofibration 
    \[ p = (p_1,p_2) \colon X \to Y \times Z \]
    is an \emph{orthofibration} if it satisfies the following equivalent conditions:
    \begin{enumerate}
        \item the cartesian transport functor $g^* \colon p_1^{-1}(y') \to p_1^{-1}(y)$ preserves $p_2$-co\-car\-te\-sian morphisms for every morphism $g \colon y \to y'$ in $Y$;
        \item the cocartesian transport functor $h_! \colon p_2^{-1}(z) \to p_2^{-1}(x')$ preserves $p_1$-car\-te\-sian morphisms for every morphism $h \colon z \to z'$ in $Z$.
    \end{enumerate}
\end{defn}

By \cite[Corollary~2.5.6]{hhln:lax-adjunctions}, the equivalences of \cref{prop:straighten-curved-ortho} restrict to equivalences
\begin{equation}\label{eq:straighten-orthofibs}
    \Fun(Y^\op,\Cocart(Z)) \simeq \Ortho(Y,Z) \simeq \Fun(Z, \Cart(Y)),
\end{equation}
where $\Ortho(Y,Z) \subseteq \CurvOrtho(Y,Z)$ denotes the full subcategory of orthofibrations.
Consequently, every orthofibration straightens to a functor $Y^\op \times Z \to \catinf$, either by cocartesian straightening over $Z$ or by cartesian straightening over $Y$.
The resulting straightening functors $\Ortho(Y,Z) \to \Fun(Y^\op \times Z,\catinf)$ are equivalent, see \cite[Remark~2.5.7]{hhln:lax-adjunctions}.

Since $(s,t) \colon \Ar^\oplax \to \catinf \times \catinf$ is an orthofibration by \cite[Proposition~7.9]{hhln:two-variable-fibrations},
combining \cite[Theorem~E]{hhln:lax-adjunctions} and \cite[Theorem~7.21]{hhln:two-variable-fibrations} shows that $(s,t)$ straightens to the functor
\[ \Fun \colon \catinf^\op \times \catinf \to \catinf. \]
In fact, we will reprove this statement without recourse to the 2-categorical machinery of \cite{hhln:two-variable-fibrations}.

\begin{theorem}\label{thm:main}
    The functor
    \[ (s_*,t_*) \colon \Alg_\cO(\Ar^\oplax) \to \Alg_\cO(\catinf) \times \Alg_\cO(\catinf) \]
    is an orthofibration which straightens to the obvious functor
    \[ (\cX,\cY) \mapsto \Alg_{\cX/\cO}(\cY). \]
\end{theorem}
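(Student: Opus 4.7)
My plan is to prove the theorem in three stages: (i) identify the fiber of $(s_*, t_*)$ over a pair $(\cC, \cD) \in \Alg_\cO(\catinf)^2$; (ii) exhibit cartesian and cocartesian lifts explicitly; and (iii) combine these to identify the straightening via \cref{prop:straighten-curved-ortho} and~\eqref{eq:straighten-orthofibs}. For (i), the defining pullback of $\Day_{\cC, \cD}^\otimes$ immediately presents the fiber as $\Alg_{\cO/\cO}(\Day_{\cC, \cD})$. To identify this with $\Alg_{\cC/\cO}(\cD)$, I would unpack an operad section $\sigma \colon \cO^\otimes \to \Day_{\cC,\cD}^\otimes$: at each object $o$ of $\cO^\otimes$ it picks out an object of the fiber of $\Ar^\oplax$ over $(\cC_o, \cD_o)$, i.e.\ a functor $\cC_o \to \cD_o$; the inert-preservation condition on $\sigma$ translates precisely to the Segal-type compatibility required to assemble these into an operad map $\cC^\otimes \to \cD^\otimes$ over $\cO^\otimes$, i.e.\ an object of $\Alg_{\cC/\cO}(\cD)$.

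For (ii), given an $\cO$-monoidal functor $f \colon \cC \to \cC'$ and $\phi \in \Alg_{\cC'/\cO}(\cD)$, the $s_*$-cartesian lift of $f$ ending at $\phi$ is the morphism $\phi \circ f^\otimes \to \phi$ in $\Alg_\cO(\Ar^\oplax)$ encoded by the commutative square in $\Ar^\oplax$ with top row $f^\otimes$, bottom row $\id_{\cD^\otimes}$, and vertical arrows $\phi \circ f^\otimes$ and $\phi$. Dually, the $t_*$-cocartesian lift of $g \colon \cD \to \cD'$ starting at $\phi \in \Alg_{\cC/\cO}(\cD)$ is the analogous morphism ending at $g^\otimes \circ \phi$. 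That these are genuinely $s_*$-cartesian (resp.\ $t_*$-cocartesian) reduces, by the pointwise nature of operad morphisms, to the pointwise fact that $s$-cartesian morphisms in $\Ar^\oplax$ are exactly those which restrict to an equivalence on $\{0\}$, which is immediate from the definition of $\Ar^\oplax$ as a full subcategory of $\catinf_{/[1]}$. The remaining curved-orthofibration conditions follow at once, since pre- and postcomposition of operad maps do not alter source or target.

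For (iii), by~\eqref{eq:straighten-orthofibs} the orthofibration $(s_*, t_*)$ is determined up to natural equivalence by its fibers together with its (co)cartesian transport. The data compiled in (i) and (ii) shows that the straightening sends $(\cC, \cD) \mapsto \Alg_{\cC/\cO}(\cD)$, with cartesian transport along $f \colon \cC \to \cC'$ given by precomposition with $f^\otimes$ and cocartesian transport along $g \colon \cD \to \cD'$ given by postcomposition with $g^\otimes$; this coincides with the bivariant functor $(\cX, \cY) \mapsto \Alg_{\cX/\cO}(\cY)$. The main obstacle is step (i): carefully demonstrating that the inert-preservation condition on operad sections of $\Day_{\cC,\cD}^\otimes \to \cO^\otimes$ corresponds, under the defining pullback, to the condition of being an operad map $\cC^\otimes \to \cD^\otimes$ over $\cO^\otimes$. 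This is essentially the $\cA = \cO$ case of \cref{thm:day-improved}, and the combinatorial manipulation is precisely what is alluded to in~\cite[Remark~5.2.5]{thenine-1}.
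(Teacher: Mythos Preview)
Your steps (i) and (ii) are essentially correct in spirit and parallel parts of the paper's argument (the paper establishes that $(s_*,t_*)$ is an orthofibration via \cref{lem:orthofib-fun} and \cref{lem:monoid-orthofib}, which formalise your ``pointwise'' claim). The genuine gap lies in step (iii).

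Your assertion that ``the orthofibration $(s_*,t_*)$ is determined up to natural equivalence by its fibers together with its (co)cartesian transport'' does not hold in the $\infty$-categorical setting. Two functors $Y^\op \times Z \to \catinf$ can agree on objects and on $1$-morphisms---equivalently, the corresponding orthofibrations can have equivalent fibers and equivalent transport functors---without being naturally equivalent as functors of $\infty$-categories; the equivalence \eqref{eq:straighten-orthofibs} says nothing to the contrary. What your data in (i) and (ii) produces is at best an identification on the level of homotopy categories. To upgrade this to an equivalence of $\infty$-functors, one must either exhibit a map of orthofibrations over $\Alg_\cO(\catinf)^2$ and check it is a fibrewise equivalence, or construct the natural equivalence directly with all its higher coherences. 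The paper does the latter: it composes the straightening with the Yoneda embedding and, using the cotensor and bifibration machinery of \cref{lem:orthofib-cotensor} and \cref{lem:underlying-bifib}, produces for every test category $I$ an equivalence $\St\big((s_*,t_*)^I_\bicart\big) \simeq \Hom_\catinf(I, \Alg_{\cX/\cO}(\cY))$ which is natural in $I$, $\cX$ and $\cY$; Yoneda then yields the desired identification. This is not a cosmetic point---it is the substantive content of the proof, and precisely what \cref{lem:identify-monoids} is engineered to deliver.

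There is also a circularity hazard in step (i): you invoke ``the $\cA = \cO$ case of \cref{thm:day-improved}'' to identify the fiber, but in the paper \cref{thm:day-improved} is \emph{deduced from} \cref{thm:main}. You would therefore need to carry out that fibre identification independently (as the paper effectively does inside \cref{lem:identify-monoids}), and moreover do so with enough naturality to feed into the coherence problem above---at which point you are essentially reproducing the paper's argument.
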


\begin{rem}\label{rem:aropl-straightened}
    By specialising \cref{thm:main} to the case that $\cO^\otimes$ is the trivial operad, we recover the claim that the orthofibration $(s,t) \colon \Ar^\oplax \to \catinf \times \catinf$ straightens to the functor $\Fun \colon \catinf^\op \times \catinf \to \catinf$.
\end{rem}

Assuming this statement, we can prove \cref{thm:day-improved}.

\begin{proof}[Proof of \cref{thm:day-improved}]
    We will apply \cref{thm:main} to $\cA^\otimes$, which provides for every pair of $\cA$-monoidal categories $(\cX,\cY)$ an equivalence
    \[ \Alg_{/\cA}(\Day_{\cX,\cY}) \simeq \Alg_{\cX/\cA}(\cY). \]
    Therefore, we obtain equivalences
    \begin{align*}
        \Alg_{\cA/\cO}(\Day_{\cC,\cD})
        &\simeq \Alg_{/\cA}(\cA \times_\cO \Day_{\cC,\cD}) \\
        &\simeq \Alg_{/\cA}(\Day_{\cA \times_\cO \cC,\cA \times_\cO \cD}) \\
        &\simeq \Alg_{\cA \times_\cO \cC/\cA}(\cA \times_\cO \cD) \\
        &\simeq \Alg_{\cA \times_\cO \cC/\cO}(\cD).
    \end{align*}
    Specialising to the case $\cA^\otimes = \Day_{\cC,\cD}^\otimes$, this provides an evaluation map
    \[ \Day_{\cC,\cD}^\otimes \times_{\cO^\otimes} \cC^\otimes \to \cD^\otimes \]
    over $\cO^\otimes$ which induces the above equivalence, making it clear that this identification is natural.
\end{proof}

Before embarking on the proof of \cref{thm:main}, let us record another easy consequence.
We require some additional notation.

\begin{defn}\label{def:lax-slices}
    Define the operads $\lslice{\catinf}{\cC}{}^\otimes$ and $\lslice{\catinf}{}{\cD}^\otimes$ by the pullbacks
    \[\begin{tikzcd}
        \lslice{\catinf}{\cC}{}^\otimes\ar[r]\ar[d] & (\Ar^\oplax)^\times\ar[d, "{(s,t)}"] \\
        \cO^\otimes \times_{\Comm^\otimes} \catinf^\times\ar[r, "\cC \times \id"] & \catinf^\times \times_{\Comm^\otimes} \catinf^\times
    \end{tikzcd}\]
    and
    \[\begin{tikzcd}
        \lslice{\catinf}{}{\cD}^\otimes\ar[r]\ar[d] & (\Ar^\oplax)^\times\ar[d, "{(s,t)}"] \\
         \catinf^\times \times_{\Comm^\otimes} \cO^\otimes\ar[r, "\id \times \cD"] & \catinf^\times \times_{\Comm^\otimes} \catinf^\times
    \end{tikzcd}\]
\end{defn}

\begin{cor}
    Let $\alpha \colon \cA^\otimes \to \cO^\otimes$ be an operad over $\cO^\otimes$.
    \begin{enumerate}
        \item The functor
        \[ t_\cC \colon \Alg_{\cA/\cO}(\lslice{\catinf}{\cC}{}) \to \Alg_{\cA/\cO}( \cO \times \catinf) \simeq \Alg_\cA(\catinf) \]
        is a cocartesian fibration which straightens to the functor
        \[ \Alg_\cA(\catinf) \to \catinf,\quad \cY \mapsto \Alg_{\cA \times_\cO \cC/\cA}(\cY) \]
        whose functoriality is given by postcomposition with $\cA$-monoidal functors.
        \item The functor
        \[ s_\cD \colon \Alg_{\cA/\cO}(\lslice{\catinf}{}{\cD}) \to \Alg_{\cA/\cO}(\catinf \times \cO) \simeq \Alg_\cA(\catinf) \]
        is a cartesian fibration which straightens to the functor
        \[ \Alg_\cA(\catinf)^\op \to \catinf,\quad \cX \mapsto \Alg_{\cX/\cO}(\cD) \]
        whose functoriality is given by precomposition with $\cA$-monoidal functors.
    \end{enumerate}
\end{cor}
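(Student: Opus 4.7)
My plan is to derive both statements as fibral restrictions of \cref{thm:main} applied to the base operad $\cA^\otimes$ rather than $\cO^\otimes$. Doing so gives an orthofibration
\[ (s_*,t_*) \colon \Alg_\cA(\Ar^\oplax) \to \Alg_\cA(\catinf) \times \Alg_\cA(\catinf) \]
which straightens to $(\cX,\cY) \mapsto \Alg_{\cX/\cA}(\cY)$. The basic observation is that for any orthofibration $(p_1,p_2) \colon X \to Y \times Z$, the restriction of $p_2$ to a fibre $p_1^{-1}(y)$ is a cocartesian fibration over $Z$ (and dually for $p_1$ on $p_2^{-1}(z)$), with straightening obtained by restricting the straightening of $(p_1,p_2)$ along $\{y\} \times Z \hookrightarrow Y \times Z$. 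This is an immediate consequence of the defining properties of an orthofibration.

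For part~(1) the main step is to identify $\Alg_{\cA/\cO}(\lslice{\catinf}{\cC}{})$ with the fibre of $s_* \colon \Alg_\cA(\Ar^\oplax) \to \Alg_\cA(\catinf)$ over the pulled-back $\cA$-monoidal category $\alpha^*\cC \simeq \cA^\otimes \times_{\cO^\otimes} \cC^\otimes$. This is a direct unpacking of the pullback defining $\lslice{\catinf}{\cC}{}^\otimes$ in \cref{def:lax-slices}: an operad map $\cA^\otimes \to \lslice{\catinf}{\cC}{}^\otimes$ over $\cO^\otimes$ is the same data as an operad map to $(\Ar^\oplax)^\times$ whose source composition is forced to be $\cC \circ \alpha$ and whose target composition is an arbitrary $\cA$-monoidal category. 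Under the identification $\Alg_{\cA/\cO}(\cO \times \catinf) \simeq \Alg_\cA(\catinf)$ (which holds because the structure map to $\cO$ is the projection), the map $t_\cC$ becomes the restriction of $t_*$ to this fibre, hence a cocartesian fibration straightening to $\cY \mapsto \Alg_{\alpha^*\cC/\cA}(\cY) = \Alg_{\cA \times_\cO \cC/\cA}(\cY)$.

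Part~(2) is entirely symmetric: the same pullback analysis for $\lslice{\catinf}{}{\cD}^\otimes$ identifies $\Alg_{\cA/\cO}(\lslice{\catinf}{}{\cD})$ with the fibre of $t_*$ over $\alpha^*\cD$, and the restriction of the cartesian fibration $s_*$ to this fibre yields a cartesian fibration over $\Alg_\cA(\catinf)$ straightening to $\cX \mapsto \Alg_{\cX/\cA}(\alpha^*\cD) \simeq \Alg_{\cX/\cO}(\cD)$, with functoriality given by precomposition with $\cA$-monoidal functors.

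The only step that is not entirely formal is the pullback identification of $\Alg_{\cA/\cO}(\lslice{\catinf}{\cC}{})$ with a fibre of $s_*$; this is really just commuting a pullback of operads with the functor $\Alg_{\cA/-}$, but some care is needed because the slicing bases $\cO^\otimes$ on the algebra side and $\catinf^\times \times_{\Comm^\otimes} \catinf^\times$ on the operad side are different. I expect this compatibility to drop out by inspecting the universal properties of the relevant pullbacks, and otherwise the proof reduces to quoting \cref{thm:main} and invoking the fibrewise behaviour of orthofibrations.
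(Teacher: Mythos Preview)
Your proposal is correct and takes essentially the same approach as the paper: apply \cref{thm:main} with base operad $\cA^\otimes$, then restrict the resulting orthofibration to the fibre over $\alpha^*\cC$ (respectively $\alpha^*\cD$). The paper compresses this into the phrase ``naturality of unstraightening,'' while you have spelled out the fibrewise behaviour of orthofibrations and the pullback identification of $\Alg_{\cA/\cO}(\lslice{\catinf}{\cC}{})$ with $s_*^{-1}(\alpha^*\cC)$; both amount to the same argument.
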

\begin{proof}
    This follows immediately from \cref{thm:main} applied to the base operad $\cA^\otimes$ together with the naturality of unstraightening. 
\end{proof}

The remainder of this section is concerned with the proof of \cref{thm:main}.
The key input for our argument is the existence of free cartesian fibrations.

\begin{const}\label{const:free}
 Let $I$ be a small category and let $f \colon X \to I$ be a functor.
 Define the \emph{free cartesian fibration} $\Fr(f)$ on $f$ as the pullback
 \[\begin{tikzcd}
  \Fr_\cart(f)\ar[r]\ar[d] & \Fun([1],I)\ar[d, "\ev_1"] \\
  X\ar[r, "f"] & I
 \end{tikzcd}\]
 together with the evaluation map
 \[ \Fr_\cart(f) \to \Fun([1],I) \xrightarrow{\ev_0} I. \]
\end{const}

\begin{prop}[{\cite[Theorem~4.5]{ghn:free-fibrations}}]\label{prop:free-cart}
    Let $I$ be a category.
    The functor $\Cart(I) \to \catinf_{/I}$ admits a left adjoint
    \[ \Fr_\cart \colon \catinf_{/I} \to \Cart(I) \]
    which sends $f \colon X \to I$ to $\Fr_\cart(f)$.
\end{prop}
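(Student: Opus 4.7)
The plan is to verify directly the defining universal property: for $f \colon X \to I$ in $\catinf_{/I}$ and $p \colon Y \to I$ a cartesian fibration, the restriction along the unit $\eta_f \colon X \to \Fr_\cart(f)$, $x \mapsto (x, \id_{f(x)})$, should induce an equivalence $\Map_{\Cart(I)}(\Fr_\cart(f), Y) \xrightarrow{\sim} \Map_{\catinf_{/I}}(X, Y)$. A preliminary step is to verify that $\Fr_\cart(f) \to I$ is indeed a cartesian fibration. This reduces to the standard fact that $\ev_0 \colon \Fun([1], I) \to I$ is a cartesian fibration, with cartesian morphisms being those squares whose bottom edge is an equivalence: for $g \colon i \to i'$ in $I$ and a lift $(x, \alpha' \colon i' \to f(x))$, the morphism $(x, \alpha' \circ g) \to (x, \alpha')$ obtained by precomposition with $g$ is a cartesian lift. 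In particular, a morphism in $\Fr_\cart(f)$ is cartesian over $I$ if and only if its projection to $X$ is an equivalence.

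Next, I would construct the inverse extension: given $g \colon X \to Y$ over $I$, define $\tilde g \colon \Fr_\cart(f) \to Y$ as the composite
\[ X \times_I \Fun([1], I) \xrightarrow{g \times_I \id} Y \times_I \Fun([1], I) \xrightarrow{\mu} Y, \]
where $\mu$ is the cartesian transport functor sending $(y, \alpha \colon i \to p(y))$ to the source of a cartesian lift of $\alpha$ at $y$. The existence of $\mu$ as an honest functor is precisely encoded by the cartesian fibration condition on $p$: it arises as a relative (over $\ev_0 \colon \Fun([1], I) \to I$) right adjoint to the degeneracy inclusion $Y \hookrightarrow Y \times_I \Fun([1], I)$, $y \mapsto (y, \id_{p(y)})$. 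By construction, $\tilde g$ sends the cartesian lift $(x, \alpha' \circ g) \to (x, \alpha')$ in $\Fr_\cart(f)$ to a cartesian morphism in $Y$ with target $g(x)$, so $\tilde g$ preserves cartesian morphisms, and $\tilde g \circ \eta_f \simeq g$ because $\mu$ applied to identity morphisms is the identity.

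The hard part is making the cartesian transport functor $\mu$ precise at the $\infty$-categorical level and verifying that the assignments $g \mapsto \tilde g$ and $h \mapsto h \circ \eta_f$ are mutually inverse on full mapping spaces, not merely on equivalence classes of objects. The cited \cite[Theorem~4.5]{ghn:free-fibrations} handles this via an explicit combinatorial model for the unit. A more conceptual alternative is to straighten cartesian fibrations to functors $I^\op \to \catinf$ and identify the claimed left adjoint with left Kan extension along the Yoneda embedding applied to $f \colon X \to I$; under this identification, the fiber of $\Fr_\cart(f)$ over $i \in I$ computes to $X \times_I I_{i/}$, which is exactly the presheaf representing the universal property one needs.
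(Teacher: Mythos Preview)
The paper does not supply its own proof of this proposition: it is quoted directly from \cite[Theorem~4.5]{ghn:free-fibrations}, with the citation placed in the proposition header and no proof environment following. Your sketch is a correct outline of the standard argument and is essentially the strategy of the cited reference: one checks that $\ev_0 \colon \Fr_\cart(f) \to I$ is cartesian (with cartesian morphisms exactly those projecting to equivalences in $X$), exhibits the unit $\eta_f$, and produces the inverse by cartesian transport. Your identification of $\mu$ as the relative right adjoint to the degeneracy inclusion $Y \hookrightarrow Y \times_I \Fun([1],I)$ is a clean way to make the transport functorial, and you are right that the remaining content is packaging this into an equivalence of mapping anima rather than a bijection on $\pi_0$.

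One small wording issue in your final paragraph: the phrase ``left Kan extension along the Yoneda embedding applied to $f$'' is not quite the right description. What you presumably mean is that the straightening of $\Fr_\cart(f)$ is the functor $i \mapsto X \times_I I_{i/}$, which one can recognise as the value on representables of the unique colimit-preserving extension of $f$ to presheaves; this is indeed how \cite{ghn:free-fibrations} phrases the result. The formula you wrote for the fibre is correct.
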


\begin{rem}
 Dualising \cref{prop:free-cart} shows that the pullback
 \[\begin{tikzcd}
  \Fr_\cocart(f)\ar[r]\ar[d] & \Fun([1],I)\ar[d, "\ev_0"] \\
  X\ar[r, "f"] & I
 \end{tikzcd}\]
 together with the evaluation map
 \[ \Fr_\cocart(f) \to \Fun([1],I) \xrightarrow{\ev_1} I \]
 is the free cocartesian fibration on $f$.
\end{rem}

\begin{prop}\label{prop:aropl-orthofib}
    \ \begin{enumerate}
     \item\label{prop:aropl-orthofib-1} The functor
        \[ (s,t) \colon \Cart([1]) \to \catinf \times \catinf \]
        is an orthofibration.
        A morphism $f \colon p \to q$ in $\Cart([1])$ is $s$-cartesian if and only if $t(f)$ is an equivalence, and $f$ is $t$-cocartesian if and only if $s(f)$ is an equivalence.
     \item\label{prop:aropl-orthofib-2} The functor
        \[ (s,t) \colon \Ar^\oplax \to \catinf \times \catinf \]
        is an orthofibration.
        Moreover, the functor $\Cart([1]) \to \Ar^\oplax$ preserves both $s$-cartesian and $t$-cocartesian morphisms.
    \end{enumerate}
\end{prop}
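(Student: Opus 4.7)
The plan is to exhibit explicit $s$-cartesian and $t$-cocartesian lifts in both $\Cart([1])$ and $\Ar^\oplax$ and to verify their universal properties by a direct mapping-space computation. The crucial observation is that the lifts can be chosen with strict (identity) 2-cell data, so that the same formulas serve in both settings simultaneously.

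For part~\ref{prop:aropl-orthofib-1}, I first identify $\Cart([1]) \simeq \Fun([1]^\op,\catinf) \simeq \Ar(\catinf)$ via cartesian straightening; under this identification an object is an arrow $\phi\colon\cX\to\cY$ in $\catinf$, and $(s,t)$ corresponds to the source-target projection $(\mathrm{dom},\mathrm{cod})\colon\Ar(\catinf)\to\catinf\times\catinf$. Given $g\colon\cX'\to\cX$ and $\phi\colon\cX\to\cY$, the natural transformation $(g,\id_\cY)\colon(\phi g\colon\cX'\to\cY)\to(\phi\colon\cX\to\cY)$ is an $s$-cartesian lift of $g$ whose $t$-component is the identity; dually, $(\id_\cX,k)\colon\phi\to(k\phi)$ is a $t$-cocartesian lift of $k\colon\cY\to\cY'$. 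The claimed characterization of $s$-cartesian (respectively $t$-cocartesian) morphisms follows because any such morphism is equivalent to one of these canonical lifts. The orthofibration compatibility holds because cartesian transport is precomposition with $g$ while cocartesian transport is postcomposition with $k$, and these commute.

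For part~\ref{prop:aropl-orthofib-2}, I claim the same canonical lifts continue to be $s$-cartesian and $t$-cocartesian after being transported along the inclusion $\Cart([1])\hookrightarrow\Ar^\oplax$. Morphisms in $\Ar^\oplax$ between cartesian fibrations classified by $\phi''\colon\cX''\to\cY''$ and $\phi\colon\cX\to\cY$ correspond, by unstraightening, to triples $(G_1,G_0,\beta)$ with $G_1\colon\cX''\to\cX$, $G_0\colon\cY''\to\cY$ and an oplax 2-cell $\beta\colon G_0\phi''\Rightarrow\phi G_1$ which need not be invertible. For the canonical $s$-cartesian lift $(h,\id_\cY,\id)\colon(\phi h)\to\phi$ with source arrow $\phi h\colon A\to\cY$, the strict equality $\phi h G_1 = \phi(hG_1)$ identifies an oplax triple into $\phi h$ with an oplax triple into $\phi$ equipped with a factorization of its first component through $h$; this is exactly the universal property required of an $s$-cartesian lift. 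The $t$-cocartesian case is dual, and the orthofibration compatibility transfers verbatim from part~(1). Preservation of $s$-cartesian and $t$-cocartesian morphisms by the inclusion is then immediate, since the canonical lifts in both categories agree.

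The principal technical obstacle is correctly accounting for the oplax 2-cells in morphisms of $\Ar^\oplax$; fortunately, the strictness of the canonical lifts reduces this to an essentially formal manipulation.
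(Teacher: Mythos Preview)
Your treatment of part~\eqref{prop:aropl-orthofib-1} is essentially identical to the paper's: both identify $\Cart([1])$ with the arrow category of $\catinf$ via straightening and read off the (co)cartesian lifts from the standard description of evaluation fibrations on functor categories.

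For part~\eqref{prop:aropl-orthofib-2} your approach genuinely diverges from the paper's. The paper does \emph{not} compute mapping anima in $\Ar^\oplax$ directly; instead it uses the free cartesian fibration adjunction $\Fr_\cart \colon \catinf_{/[1]} \rightleftarrows \Cart([1])$ to rewrite $\Hom_{\Ar^\oplax}(r,-) \simeq \Hom_{\Cart([1])}(\Fr_\cart(r),-)$ and then reduces to part~\eqref{prop:aropl-orthofib-1}. The $s$-cartesian case is immediate from $s(\Fr_\cart(r)) \simeq s(r)$, while the $t$-cocartesian case requires in addition the pushout description of the total space of a cartesian fibration over $[1]$ (cited from Gepner--Haugseng--Nikolaus). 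Your argument, by contrast, is symmetric: you invoke the description of morphisms in $\Ar^\oplax$ as oplax squares $(G_1,G_0,\beta)$ and verify the cartesian and cocartesian universal properties by an explicit bookkeeping of the $2$-cell $\beta$, which becomes trivial because the canonical lifts have identity $2$-cells.

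Your route is more elementary and conceptually uniform, but the phrase ``by unstraightening'' does not on its own justify the oplax-triple description of $\Hom_{\Ar^\oplax}$: ordinary straightening only sees $\Cart([1])$. To make this step rigorous you would either cite a lax/oplax unstraightening theorem, or---more in keeping with the paper's toolkit---derive it from the pushout presentation $E' \simeq ([1]^\op \times s(p')) \amalg_{s(p')} t(p')$ of the total space, which is exactly the input the paper uses in its $t$-cocartesian argument. Once that description is in hand, your verification is correct. The paper's approach has the incidental advantage that the free-fibration machinery is reused later (for free orthofibrations in \cref{prop:free-orthofib}), so no extra ingredients are introduced solely for this proposition.
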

\begin{proof}
    By unstraightening, $\Cart([1]) \simeq \Fun([1]^\op,\catinf)$, with $s$ and $t$ corresponding to evaluation at $1$ and $0$, respectively.
    By \cite[Corollary~2.4.7.11]{HTT}, the evaluation functor at $0$ is a cartesian fibration and the evaluation functor at $1$ is a cocartesian fibration, and the characterisation of $s$-cartesian and $t$-cocartesian morphisms follows from \cite[Lemma~2.4.7.5]{HTT}.
    The explicit description of $s$-cartesian and $t$-cocartesian morphisms also implies that $(s,t)$ is an orthofibration.
    
    For assertion~\eqref{prop:aropl-orthofib-2}, it suffices to show that $s$-cartesian morphisms in $\Cart([1])$ are also $s$-cartesian in $\Ar^\oplax$, and that the same holds true for $t$-cocartesian morphisms.
    
    So let $f \colon p \to q$ be an $s$-cartesian morphism in $\Cart([1])$, where $p \colon X \to [1]$ and $q \colon Y \to [1]$ are cartesian fibrations.
    For every cartesian fibration $r \colon Z \to [1]$, we have to show that the commutative square
    \[\begin{tikzcd}
	  \Hom_{\catinf_{/[1]}}(r,p)\ar[r, "f \circ -"]\ar[d] & \Hom_{\catinf_{/[1]}}(r,q)\ar[d] \\
	  \Hom_{\catinf}(s(r),s(p))\ar[r, "s(f) \circ -"] & \Hom_{\catinf}(s(r),s(q))
	\end{tikzcd}\]
	is a pullback.
	Observing that $s(\Fr_\cart(r)) \simeq [1]_{1/} \times s(r) \simeq s(r)$, this square is identified via \cref{prop:free-cart} with the commutative square
	\[\begin{tikzcd}
	  \Hom_{\Cart([1])}(\Fr_\cart(r),p)\ar[r]\ar[d] & \Hom_{\Cart([1])}(\Fr_\cart(r),q)\ar[d] \\
	  \Hom_{\catinf}(s(r),s(p))\ar[r, "s(f) \circ -"] & \Hom_{\catinf}(s(r),s(q))
	\end{tikzcd}\]
	which is a pullback by assumption.
	
	Let now $f \colon p \to q$ be a $t$-cocartesian morphism in $\Cart([1])$.
	Let $r \colon Z \to [1]$ be an arbitrary cartesian fibration.
	Note that $t(\Fr_\cart(p)) \simeq X$ and $t(\Fr_\cart(q)) \simeq Y$.
	Using \cref{prop:free-cart} once more, it suffices to show that the outer square in the commutative diagram
	\begin{equation}\label{eq:aropl-orthofib}
	\begin{tikzcd}[column sep=3em]
	  \Hom_{\Cart([1])}(\Fr_\cart(q),r)\ar[r, "- \circ \Fr(f)"]\ar[d, "t"'] & \Hom_{\Cart([1])}(\Fr_\cart(p),r)\ar[d, "t"] \\
	  \Hom_{\catinf}(Y,t(r))\ar[r, "- \circ f"]\ar[d] & \Hom_{\catinf}(X,t(r))\ar[d] \\
	  \Hom_{\catinf}(t(q),t(r))\ar[r, "- \circ t(f)"] & \Hom_{\catinf}(t(p),t(r))
	\end{tikzcd}
	\end{equation}
	is a pullback.
    Since $s(\Fr_\cart(f)) \simeq s(f)$ is an equivalence, the morphism $\Fr_\cart(f)$ is $t$-cocartesian, which means that the top square is a pullback.
	For the lower square, we use the explicit formula for cartesian unstraightening over $[1]^\op$ from \cite[Proposition~3.1]{ghn:free-fibrations}: since the left square and outer square in the commutative diagram
	\[\begin{tikzcd}
	  s(p)\ar[r, "\St(p)"]\ar[d, "1 \times \id"'] & t(p)\ar[r, "t(f)"]\ar[d] & t(q)\ar[d] \\
	  {[1]}^\op \times s(p)\ar[r] & X\ar[r, "f"] & Y
	\end{tikzcd}\]
	are pushouts, so is the right square.
	This implies that the bottom square in \eqref{eq:aropl-orthofib} is a pullback.
	Consequently, the outer square in \eqref{eq:aropl-orthofib} is a pullback as required.
\end{proof}

Since we are considering the cartesian symmetric monoidal structures on $\Ar^\oplax$ and $\catinf$, we can bootstrap the analogous statements for categories of $\cO$-algebras from this.

\begin{lem}\label{lem:orthofib-fun}
    Let $p = (p_1,p_2) \colon X \to Y \times Z$ be a curved orthofibration/an orthofibration and let $I$ be a small category.
    Then
    \[ p_* = ((p_1)_*,(p_2)_*) \colon \Fun(I,X) \to \Fun(I,Y) \times \Fun(I,Z) \]
    is also a curved orthofibration/an orthofibration.
    The relevant cartesian and cocartesian morphisms are given by those natural transformations whose components are all cartesian or cocartesian, respectively.
\end{lem}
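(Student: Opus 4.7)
The plan is to reduce everything to the following standard fact about functor categories: for any small category $I$ and any cartesian (resp.\ cocartesian) fibration $q \colon W \to V$, the induced functor $q_* \colon \Fun(I,W) \to \Fun(I,V)$ is again a cartesian (resp.\ cocartesian) fibration, whose (co)cartesian morphisms are exactly those natural transformations all of whose components are $q$-(co)cartesian in $W$. Applied to $p_1$ and to $p_2$ in turn, this immediately delivers conditions (1) and (2) of a curved orthofibration for $p_*$, together with the pointwise characterisation of the relevant lifts announced in the statement.

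Conditions (3) and (4) then follow formally from this pointwise description: if $\alpha$ is $(p_1)_*$-cartesian, then each component $\alpha_i$ is $p_1$-cartesian, so $p_2(\alpha_i)$ is an equivalence in $Z$ by condition (3) for $p$; a natural transformation in $\Fun(I,Z)$ whose components are all equivalences is itself an equivalence, hence $(p_2)_*(\alpha)$ is an equivalence. The symmetric argument disposes of condition (4), so $p_*$ is a curved orthofibration.

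For the orthofibration refinement, the key observation is that cartesian transport in $\Fun(I,X)$ is computed componentwise: given $g \colon G \to G'$ in $\Fun(I,Y)$ and $F'$ over $G'$, the functor $g^*$ on the fibres of $(p_1)_*$ sends $F'$ to (a representative of) $i \mapsto g_i^*(F'(i))$, with cartesian transport taken in $X$ along each component $g_i$. Now let $\beta \colon F \to F'$ be a $(p_2)_*$-cocartesian morphism inside a single fibre of $(p_1)_*$, i.e.\ a pointwise $p_2$-cocartesian transformation. Each $g_i^*(\beta_i)$ is $p_2$-cocartesian because $p$ is an orthofibration; therefore $g^*(\beta)$ is pointwise $p_2$-cocartesian, and hence $(p_2)_*$-cocartesian, verifying the orthofibration condition for $p_*$.

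The main obstacle is really the cited pointwise-lifts fact, which I would treat as standard in the $\infty$-categorical literature (it can be reduced, for example, to the case $I = [n]$ via the evaluation functors, and follows from the HTT machinery for $\Fun(K,-)$ applied to (co)cartesian fibrations). Once that black box is in place, every subsequent verification is entirely formal, as indicated above.
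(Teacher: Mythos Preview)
Your proof is correct and follows essentially the same approach as the paper: the paper's proof consists of the single line ``This is immediate from \cite[Corollary~3.2.2.12]{HTT}'', which is precisely the pointwise-lifts fact you invoke as a black box. You have simply spelled out the routine verifications of conditions (3), (4), and the orthofibration refinement that the paper leaves implicit.
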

\begin{proof}
    This is immediate from \cite[Corollary~3.2.2.12]{HTT}.
\end{proof}

\begin{lem}\label{lem:monoid-orthofib}
 Let $X$, $Y$ and $Z$ be categories with finite products.
 Suppose that $p = (p_1,p_2) \colon X \to Y \times Z$ is a functor such that
 \begin{enumerate}
  \item\label{lem:monoid-orthofib-1} $p$ is a curved orthofibration;
  \item\label{lem:monoid-orthofib-2} $p$ preserves finite products;
  \item\label{lem:monoid-orthofib-3} finite products of $p_1$-cartesian morphisms are $p_1$-cartesian;
  \item\label{lem:monoid-orthofib-4} finite products of $p_2$-cocartesian morphisms are $p_2$-cocartesian.
 \end{enumerate}
 Then the induced functor $p_* \colon \Mon_\cO(X) \to \Mon_\cO(Y) \times \Mon_\cO(Z)$ is a curved orthofibration.
 A morphism in $\Mon_\cO(X)$ is $(p_1)_*$-cartesian or $(p_2)_*$-cocartesian if and only if it is $(p_1)_*$-cartesian or $(p_2)_*$-cocartesian in $\Fun(\cO^\otimes,X)$.
\end{lem}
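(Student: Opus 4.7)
The plan is to reduce via \cref{lem:orthofib-fun}: this shows that
\[ p_* \colon \Fun(\cO^\otimes, X) \to \Fun(\cO^\otimes, Y) \times \Fun(\cO^\otimes, Z) \]
is a curved orthofibration whose $(p_1)_*$-cartesian and $(p_2)_*$-cocartesian morphisms are detected pointwise. Since $\Mon_\cO(X) \subseteq \Fun(\cO^\otimes, X)$ (and similarly for $Y$ and $Z$) is a full subcategory, it suffices to show that a pointwise $(p_1)_*$-cartesian lift whose target lies in $\Mon_\cO(X)$ also has its source in $\Mon_\cO(X)$, and dually for $(p_2)_*$-cocartesian lifts. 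The four axioms for the restricted $p_*$ to be a curved orthofibration then follow formally: existence of (co)cartesian lifts is by construction; the projection axioms are inherited from \cref{lem:orthofib-fun}; and the asserted characterization of (co)cartesian morphisms holds because pointwise lifts produce morphisms that are already $(p_1)_*$-cartesian (resp. $(p_2)_*$-cocartesian) in $\Mon_\cO(X)$ by fullness of the inclusion, while conversely uniqueness of cartesian lifts in $\Mon_\cO(X)$ forces every such morphism to agree up to equivalence with a pointwise lift.

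The crux is the Segal check. Let $M \in \Mon_\cO(X)$ and $g \colon N \to (p_1)_* M$ a morphism in $\Mon_\cO(Y)$, and let $\tilde g \colon \tilde M \to M$ denote the pointwise $p_1$-cartesian lift of $g$ in $\Fun(\cO^\otimes, X)$; by construction one has $(p_1)_* \tilde M = N$. For any tuple of inert morphisms $\rho_i \colon c \to c_i$ ($1 \leq i \leq n$) in $\cO^\otimes$ exhibiting $c$ as lying over $\langle n \rangle \in \Comm^\otimes$, the naturality square
\[\begin{tikzcd}
    \tilde M(c)\ar[r]\ar[d] & \prod_i \tilde M(c_i)\ar[d] \\
    M(c)\ar[r, "\sim"'] & \prod_i M(c_i)
\end{tikzcd}\]
commutes, where the horizontal morphisms are the respective Segal maps. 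The left vertical morphism is $p_1$-cartesian by construction of $\tilde M$, and the right vertical morphism is $p_1$-cartesian by hypothesis~(\ref{lem:monoid-orthofib-3}). The bottom Segal map is an equivalence since $M$ is an $\cO$-monoid, so the composite from $\tilde M(c)$ to $\prod_i M(c_i)$ is $p_1$-cartesian. By the cancellation property of cartesian morphisms, the top Segal map is $p_1$-cartesian as well. Hypothesis~(\ref{lem:monoid-orthofib-2}) ensures that $p_1$ preserves finite products, so the $p_1$-image of this top map is the Segal map for $(p_1)_* \tilde M = N$, which is an equivalence because $N$ is a monoid. A $p_1$-cartesian morphism over an equivalence is itself an equivalence, so $\tilde M$ satisfies the Segal condition and defines an object of $\Mon_\cO(X)$. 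The dual argument, using hypothesis~(\ref{lem:monoid-orthofib-4}) in place of~(\ref{lem:monoid-orthofib-3}), handles the case of $p_2$-cocartesian lifts.

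I expect the Segal verification above to be the main obstacle, the essential ingredients being the cancellation property for cartesian morphisms and the fact that a cartesian morphism over an equivalence is itself an equivalence. All remaining points are formal consequences of the fullness of $\Mon_\cO(-) \hookrightarrow \Fun(\cO^\otimes, -)$ and the corresponding statements in \cref{lem:orthofib-fun}.
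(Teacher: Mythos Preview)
Your proposal is correct and follows essentially the same approach as the paper: reduce to $\Fun(\cO^\otimes,X)$ via \cref{lem:orthofib-fun}, then verify that the pointwise (co)cartesian lift of a morphism of $\cO$-monoids again lands in $\Mon_\cO(X)$ by the Segal-square argument using hypothesis~\eqref{lem:monoid-orthofib-3} (resp.~\eqref{lem:monoid-orthofib-4}) and cancellation for cartesian morphisms (the paper cites \cite[Lemma~2.4.2.7]{HTT} for this step). Your treatment is in fact slightly more explicit than the paper's about why hypothesis~\eqref{lem:monoid-orthofib-2} is needed and about how the characterisation of (co)cartesian morphisms follows from fullness.
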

\begin{proof}
  By \cref{lem:orthofib-fun}, the induced functor
  \[ p_* = ((p_1)_*,(p_2)_*) \colon \Fun(\cO^\otimes,X) \to \Fun(\cO^\otimes,Y) \times \Fun(\cO^\otimes,Z) \]
  also satisfies properties~\eqref{lem:monoid-orthofib-1} -- \eqref{lem:monoid-orthofib-4}.
  
  Let $g \colon N \to N'$ be a morphism in $\Mon_\cO(Y)$ and let $M'$ be an $\cO$-monoid in $X$ lifting $N'$.
  Considering $g$ as a morphism in $\Fun(\cO^\otimes,Y)$, there exists a cartesian lift $f \colon M \to M'$ in $\Fun(\cO^\otimes,X)$.
  We claim that $M$ is also an $\cO$-monoid in $X$.
  For $x = x_1 \boxtimes \ldots \boxtimes x_n$ in $\cO^\otimes_{\langle n \rangle}$, the Segal maps of $M$ and $M'$ fit into a commutative square
  \[\begin{tikzcd}
   M(x)\ar[r]\ar[d] & \prod_{i=1}^n M(x_i)\ar[d] \\
   M'(x)\ar[r, "\simeq"] & \prod_{i=1}^n M'(x_i)
  \end{tikzcd}\]
  Since products of cartesian morphisms in $X$ are cartesian, both vertical arrows are cartesian morphisms.
  By \cite[Lemma~2.4.2.7]{HTT}, it follows that the top horizontal morphism is also cartesian.
  Since $N$ is an $\cO$-monoid, this morphism is a cartesian lift of an equivalence, and therefore itself an equivalence.
  It follows that $(p_1)_* \colon \Mon_\cO(X) \to \Mon_\cO(Y)$ is a cartesian fibration.
  
  Since $(p_1)_*$-cartesian lifts in $\Fun(\cO^\otimes,X)$ are characterised by being pointwise $p_1$-cartesian, it also follows that $(p_1)_*$-cartesian morphisms project to equivalences under $(p_2)_*$.
  
  The argument for $(p_2)_*$ is completely analogous.
\end{proof}

In particular, we obtain the following.

\begin{prop}\label{prop:alg-s-t-orthofibration}
    The functor
    \[ (s_*,t_*) \colon \Alg_\cO(\Ar^\oplax) \to \Alg_\cO(\catinf) \times \Alg_\cO(\catinf) \]
    is an orthofibration.
    Both $s_*$-cartesian and $t_*$-cocartesian morphisms are detected by the functor
    \[ \Alg_\cO(\Ar^\oplax) \xrightarrow{\sim} \Mon_\cO(\Ar^\oplax) \to \Fun(\cO^\otimes,\Ar^\oplax). \]
\end{prop}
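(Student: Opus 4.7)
The plan is to apply \cref{lem:monoid-orthofib} to the functor $(s,t) \colon \Ar^\oplax \to \catinf \times \catinf$, which is an orthofibration by \cref{prop:aropl-orthofib}\eqref{prop:aropl-orthofib-2}, and then transport the conclusion along the standard equivalence $\Alg_\cO(X) \simeq \Mon_\cO(X)$ valid for any $X$ endowed with the cartesian symmetric monoidal structure.

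Of the four hypotheses of \cref{lem:monoid-orthofib}, condition~\eqref{lem:monoid-orthofib-1} holds by the cited reference, and condition~\eqref{lem:monoid-orthofib-2} is routine once one observes that finite products in $\Ar^\oplax$ are fibre products over $[1]$, whose fibres over $0$ and $1$ are precisely the products of the fibres. The substance of the verification lies in \eqref{lem:monoid-orthofib-3} and \eqref{lem:monoid-orthofib-4}. Given $s$-cartesian morphisms $f$ and $g$ in $\Ar^\oplax$, I would invoke uniqueness of $s$-cartesian lifts to replace $f$ and $g$, up to equivalence over their respective targets, by the lifts produced inside $\Cart([1])$; these are still $s$-cartesian in $\Ar^\oplax$ thanks to \cref{prop:aropl-orthofib}\eqref{prop:aropl-orthofib-2}. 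By \cref{prop:aropl-orthofib}\eqref{prop:aropl-orthofib-1}, their $t$-components are equivalences, so the $t$-component of their product remains an equivalence, and the product is again $s$-cartesian in $\Cart([1])$, hence in $\Ar^\oplax$. Exchanging the roles of $s$ and $t$ handles \eqref{lem:monoid-orthofib-4}.

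With the hypotheses of \cref{lem:monoid-orthofib} in place, the induced functor $(s_*,t_*) \colon \Mon_\cO(\Ar^\oplax) \to \Mon_\cO(\catinf) \times \Mon_\cO(\catinf)$ is a curved orthofibration whose $(s_*)$-cartesian and $(t_*)$-cocartesian morphisms are detected in $\Fun(\cO^\otimes, \Ar^\oplax)$. To promote this to an orthofibration, I would use that $(s,t) \colon \Ar^\oplax \to \catinf \times \catinf$ is itself an orthofibration, together with the pointwise detection of the relevant (co)cartesian morphisms at the monoid level: the extra orthofibration condition is then inherited pointwise from $(s,t)$.

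The main obstacle is the bookkeeping in the second paragraph. The explicit characterisation of $s$-cartesian and $t$-cocartesian morphisms in \cref{prop:aropl-orthofib}\eqref{prop:aropl-orthofib-1} is only stated inside $\Cart([1])$, whereas \cref{lem:monoid-orthofib} needs closure under products inside the strictly larger $\Ar^\oplax$. Reducing to $\Cart([1])$ via uniqueness of cartesian lifts is therefore the crucial move.
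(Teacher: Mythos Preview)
Your proposal is correct and follows the same route as the paper: apply \cref{lem:monoid-orthofib} to the orthofibration $(s,t)$ of \cref{prop:aropl-orthofib}, then transport along $\Alg_\cO \simeq \Mon_\cO$. The paper's proof is a single sentence and leaves implicit exactly the two points you spell out---the verification of conditions~\eqref{lem:monoid-orthofib-3} and~\eqref{lem:monoid-orthofib-4} (where your reduction to $\Cart([1])$ via uniqueness of cartesian lifts is the right move, since the explicit characterisation of (co)cartesian morphisms is only stated there) and the upgrade from the curved orthofibration produced by \cref{lem:monoid-orthofib} to an honest orthofibration via the pointwise detection of (co)cartesian morphisms.
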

\begin{proof}
    Due to \cref{prop:aropl-orthofib}, the functor $(s,t) \colon \Ar^\oplax \to \catinf \times \catinf$ is an orthofibration satisfying the assumptions of \cref{lem:monoid-orthofib}.
    The proposition follows.
\end{proof}

To determine the straightening of $(s_*,t_*)$, we require some additional preparation.
First, we observe that the existence of free (co)cartesian fibrations implies the existence of free orthofibrations.

\begin{cor}\label{prop:free-orthofib}
    The inclusion functor $\Ortho(Y,Z) \to \catinf_{/Y \times Z}$ admits a left adjoint
    \[ \Fr \colon \catinf_{/Y \times Z} \to \Ortho(Y,Z). \]
\end{cor}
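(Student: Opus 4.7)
The plan is to construct $\Fr$ explicitly by iterating the free cartesian and free cocartesian fibration constructions. For $f = (f_1,f_2) \colon X \to Y \times Z$, I would set
\[ \Fr(f) \coloneqq X \times_Y \Fun([1], Y) \times_Z \Fun([1], Z), \]
where the first pullback is formed using $f_1$ and $\ev_1 \colon \Fun([1],Y) \to Y$, and the second using the composite $X \times_Y \Fun([1],Y) \to X \xrightarrow{f_2} Z$ together with $\ev_0 \colon \Fun([1],Z) \to Z$. This is precisely $\Fr_\cocart(\Fr_\cart(f_1) \to Z)$, equipped with structure map $(\ev_0, \ev_1) \colon \Fr(f) \to Y \times Z$ and canonical unit $\eta_f \colon X \to \Fr(f)$ sending $x$ to $(x, \id_{f_1(x)}, \id_{f_2(x)})$.

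First I would verify that $\Fr(f)$ is an orthofibration. The cocartesian dual of \cref{prop:free-cart} identifies the projection to $Z$ as a cocartesian fibration. The projection to $Y$ factors as $\Fr(f) \to \Fr_\cart(f_1) \to Y$: the first map is a base change of the cartesian fibration $\ev_0 \colon \Fun([1],Z) \to Z$, and the second is a cartesian fibration by \cref{prop:free-cart}, so their composite is cartesian. Direct inspection shows that cartesian lifts over $Y$ modify only the middle coordinate and cocartesian lifts over $Z$ only the rightmost coordinate, so each type of lift projects to an equivalence under the opposite projection and the two types commute on the nose. This confirms both the curved orthofibration condition and the Beck-Chevalley compatibility required for an orthofibration.

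Second, I would establish the universal property of $\eta_f$. Given an orthofibration $q \colon W \to Y \times Z$ and a morphism $g \colon X \to W$ over $Y \times Z$, \cref{prop:free-cart} produces a unique extension $g' \colon \Fr_\cart(f_1) \to W$ over $Y$ preserving cartesian morphisms, given on objects by $g'(x, \gamma) = \gamma^* g(x)$; since $q$ is a curved orthofibration, the cartesian lifts project to equivalences under $q_2$, so $g'$ upgrades canonically to a map over $Y \times Z$. The cocartesian dual of \cref{prop:free-cart} then extends $g'$ uniquely to $\tilde g \colon \Fr(f) \to W$ over $Z$ preserving cocartesian morphisms; the full orthofibration condition on $q$ (via \cref{lem:preserve-lifts}) guarantees that $\tilde g$ still preserves cartesian morphisms over $Y$, producing the required morphism in $\Ortho(Y,Z)$.

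The main obstacle will be promoting this pointwise construction of $\tilde g$ to a genuine adjunction, i.e., to a natural equivalence of mapping spaces $\Hom_{\Ortho(Y,Z)}(\Fr(f), q) \simeq \Hom_{\catinf_{/Y \times Z}}(f, q)$. I would handle this by base-changing the adjunction $\Fr_\cart \dashv \iota$ from $\catinf_{/Y}$ to the appropriate wide subcategory of $\catinf_{/Y \times Z}$ whose morphisms preserve cartesian lifts over $Y$, composing with the analogous base-changed adjunction for $\Fr_\cocart$, and invoking the orthofibration condition on $q$ via \cref{lem:preserve-lifts} to identify the composite hom equivalence with $\Hom_{\Ortho(Y,Z)}(\Fr(f), q)$.
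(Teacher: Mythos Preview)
Your proposal is correct in outline and leads to the same explicit description of $\Fr(f)$ that the paper records in the remark following the corollary, but the argument is organised quite differently from the paper's own proof.

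The paper never touches the explicit formula in the proof itself. Instead it factors the inclusion $\Ortho(Y,Z) \to \catinf_{/Y\times Z}$ as a composite of right adjoints using straightening: first $\catinf_{/Y\times Z} \simeq (\catinf_{/Y})_{/p_Z}$, then the sliced adjunction $\Fr_\cart \dashv \iota$ gives $(\catinf_{/Y})_{/p_Z} \rightleftarrows \Cart(Y)_{/p_Z}$, and finally the identification $\Cart(Y)_{/p_Z} \simeq \Fun(Y^\op,\catinf_{/Z})$ lets one apply the free cocartesian fibration pointwise and land in $\Fun(Y^\op,\Cocart(Z)) \simeq \Ortho(Y,Z)$. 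All coherence issues are absorbed into known equivalences, so the adjunction is obtained in one line. Your route instead writes down $\Fr(f)$ concretely and verifies the orthofibration axioms and the universal property by hand; this buys an immediate explicit formula and an elementary feel, but the price is exactly the ``main obstacle'' you flag. In particular, your second composed adjunction---upgrading $\Fr_\cocart$ to a left adjoint from $\Cart(Y)_{/p_Z}$ into $\Ortho(Y,Z)$---is not literally a slice of $\Fr_\cocart \dashv \iota$ and requires checking that $\Fr_\cocart$ respects the cartesian-over-$Y$ structure functorially; the paper sidesteps this entirely by passing through $\Fun(Y^\op,-)$. The paper's remark explicitly notes that one can instead adapt the direct argument of \cite[Theorem~4.5]{ghn:free-fibrations}, which is essentially what you are proposing.
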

\begin{proof}
    We write $p_Z \colon Y \times Z \to Z$ for the projection functor.
    Since $p_Z$ is a cartesian fibration, the equivalence $\catinf_{/Y \times Z} \simeq (\catinf_{/Y})/p_Z$ and \cref{prop:free-cart} induce an adjunction
    \[ \catinf_{/Y \times Z} \rightleftarrows \Cart(Y)/p_Z. \]
    After identifying
    \[ \Cart(Y)/p_Z \simeq \Fun(Y^\op,\catinf)_{/\con_Z} \simeq \Fun(Y^\op,\cat_{/Z}), \]
    the existence of free cocartesian fibrations induces an adjunction
    \[ \Cart(Y)/p_Z \rightleftarrows \Fun(Y^\op,\Cocart(Z)) \simeq \Ortho(Y,Z).\qedhere \]
\end{proof}

\begin{rem}
    Unwinding the proof of \cref{prop:free-orthofib}, one finds that the free orthofibration on a functor $f \colon X \to Y \times Z$ is given by the pullback
    \[\begin{tikzcd}
        \Fr(f)\ar[r]\ar[d] & \Fun([1],Y) \times \Fun([1],Z)\ar[d, "\ev_1 \times \ev_0"] \\
        X\ar[r, "f"] & Y \times Z
    \end{tikzcd}\]
    together with the evaluation map
    \[ \Fr(f) \to \Fun([1],Y) \times \Fun([1],Z) \xrightarrow{\ev_0 \times \ev_1} Y \times Z. \]
    One can adapt the proof of \cite[Theorem~4.5]{ghn:free-fibrations} to show directly that this yields a left adjoint to the functor $\Ortho(Y,Z) \to \catinf_{/Y \times Z}$.
\end{rem}

We can apply this statement to identify the straightenings of cotensors of orthofibrations.

\begin{lem}\label{lem:orthofib-cotensor}
    Let $p \colon X \to Y \times Z$ be an orthofibration.
    Consider the pullback
    \[\begin{tikzcd}
        X^I\ar[r]\ar[d, "p^I"'] & \Fun(I,X)\ar[d, "p_*"] \\
        Y \times Z\ar[r, "\con"] & \Fun(I,Y \times Z)
    \end{tikzcd}\]
    Then $p^I = (p^I_1,p^I_2)$ is an orthofibration such that both $p^I_1$-cartesian and $p^I_2$-cocartesian morphisms are detected componentwise in $\Fun(I,X)$.
    Moreover, $p^I$ straightens to the functor
    \[ \Fun(I,\St(p)) \colon Y^\op \times Z \to \catinf. \]
\end{lem}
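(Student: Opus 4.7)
The plan is to realise $p^I$ as a pullback of the orthofibration $p_* \colon \Fun(I, X) \to \Fun(I, Y) \times \Fun(I, Z)$ supplied by \cref{lem:orthofib-fun}, and then to identify its straightening by unwinding fibers and transport functors.

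First, I would observe that $p^I$ is the pullback of $p_*$ along the constant diagram embedding $\con \colon Y \times Z \to \Fun(I, Y \times Z)$. Since both cartesian and cocartesian fibrations are stable under pullback, $p^I$ is an orthofibration. Moreover, because \cref{lem:orthofib-fun} characterises the $p_*$-(co)cartesian morphisms as the componentwise ones, the same description transfers to $p^I$: a morphism in $X^I$ is $p^I_1$-cartesian (resp.\ $p^I_2$-cocartesian) if and only if each of its components in $X$ is $p_1$-cartesian (resp.\ $p_2$-cocartesian). This disposes of the first half of the lemma.

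For the straightening statement, I would compute fibers and transport functors. The fiber of $p^I$ over $(y, z)$ unwinds as $\{(y, z)\} \times_{\Fun(I, Y \times Z)} \Fun(I, X) \simeq \Fun(I, X_{(y, z)}) \simeq \Fun(I, \St(p)(y, z))$, matching $\Fun(I, \St(p))$ pointwise. For $g \colon y \to y'$ in $Y$, the componentwise description of $p^I_1$-cartesian morphisms implies that a $p^I_1$-cartesian lift of $g$ at a functor $f \colon I \to X_{(y', z)}$ is obtained by postcomposing $f$ with the $p$-cartesian transport $g^* \colon X_{(y', z)} \to X_{(y, z)}$; hence cartesian transport in $p^I$ along $g$ is $\Fun(I, \St(p)(g, \id_z))$. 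A symmetric argument handles cocartesian transport along morphisms in $Z$.

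The main obstacle is promoting these pointwise comparisons to a natural equivalence $\St(p^I) \simeq \Fun(I, \St(p))$ in $\Fun(Y^\op \times Z, \catinf)$. A clean route is to first establish the analogous cotensor identification for plain cocartesian fibrations --- if $q \colon W \to Z$ straightens to $Q \colon Z \to \catinf$, then $Z \times_{\Fun(I, Z)} \Fun(I, W)$ straightens to $\Fun(I, Q)$, which follows by the same pullback argument using only the cocartesian half of \cref{lem:orthofib-fun} --- and then transport the identification through the equivalence $\Ortho(Y, Z) \simeq \Fun(Y^\op, \Cocart(Z))$ of \eqref{eq:straighten-orthofibs}, noting that the fibers of $p^I \to Y$ are precisely the cotensors by $I$ of the fibers of $p \to Y$ regarded as objects of $\Cocart(Z)$.
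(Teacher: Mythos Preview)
Your first half is correct and matches the paper. For the straightening, however, you correctly isolate the obstacle but do not resolve it: the reduction to the cocartesian case merely relocates the problem. Saying the cocartesian version ``follows by the same pullback argument using only the cocartesian half of \cref{lem:orthofib-fun}'' is not a proof --- that lemma only shows the cotensor is again a cocartesian fibration with componentwise cocartesian edges; it says nothing about what it straightens to. Matching fibres and transport functors pointwise does not by itself yield a natural equivalence of functors to $\catinf$: you would still need either an explicit comparison map in $\Cocart(Z)$ which you then check is a fibrewise equivalence, or an argument that $(-)^I$ is the cotensor by $I$ in $\Cocart(Z)$ (rather than merely in $\catinf_{/Z}$), so that the straightening equivalence transports it to the pointwise cotensor $\Fun(I,-)$ in $\Fun(Z,\catinf)$. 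Neither ingredient is supplied, and the second is not automatic since morphisms in $\Cocart(Z)$ are constrained to preserve cocartesian edges.

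The paper sidesteps this coherence issue with a Yoneda argument using the free orthofibration functor $\Fr$ of \cref{prop:free-orthofib}. One computes $\Hom_{Y \times Z}(T, X^I) \simeq \Hom_{Y \times Z}(T \times I, X)$, passes through the adjunction to $\Hom_{\Ortho(Y,Z)}(\Fr(T \times I), X)$, observes $\Fr(T \times I) \simeq \Fr(T) \times I$ from the explicit pullback formula for $\Fr$, straightens to $\Nat(\St(\Fr(T)), \Fun(I,\St(p)))$, and unwinds back to $\Hom_{Y \times Z}(T, \Un(\Fun(I,\St(p))))$, all naturally in $T$. This produces the identification $X^I \simeq \Un(\Fun(I,\St(p)))$ directly, with no pointwise-to-natural promotion required.
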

\begin{proof}
    The first part of the lemma is precisely \cref{lem:orthofib-fun}, so we only have to prove the assertion about the straightening of $p^I$.
    For every functor $f \colon T \to Y \times Z$, there exist by \cref{prop:free-orthofib} natural equivalences
    \begin{align*}
        \Hom_{Y \times Z}(T,X^I)
        &\simeq \Hom_{Y \times Z}(T \times I,X) \\
        &\simeq \Hom_{\Ortho(Y,Z)}(\Fr(T \times I \to Y \times Z),X) \\
        &\simeq \Hom_{\Ortho(Y,Z)}(\Fr(f) \times I,X) \\
        &\simeq \Nat(\St(\Fr(f)) \times I,\St(p)) \\
        &\simeq \Nat(\St(\Fr(f)), \Fun(I,\St(p))) \\
        &\simeq \Hom_{\Ortho(Y,Z)}(\Fr(f),\Un(\Fun(I,\St(p)))) \\
        &\simeq \Hom_{Y \times Z}(T,\Un(\Fun(I,\St(p)))),
    \end{align*}
    which implies the lemma.
\end{proof}

Recall the following definition from \cite[Proposition~2.3.13]{hhln:lax-adjunctions}.

\begin{defn}
    A functor $p = (p_1,p_2) \colon X \to Y \times Z$ is a \emph{bifibration} if the following conditions are satisfied:
    \begin{enumerate}
        \item $p_1$ is a cartesian fibration such that a morphism in $X$ is $p_1$-cartesian if and only if it projects to an equivalence under $p_2$;
        \item $p_2$ is a cocartesian fibration such that a morphism in $X$ is $p_2$-cocartesian if and only if it projects to an equivalence under $p_1$.
    \end{enumerate}
\end{defn}

By \cite[Corollary~2.3.15]{hhln:lax-adjunctions}, the equivalences from~\eqref{eq:straighten-orthofibs} restrict to equivalences
\[ \Fun(Y^\op, \LFib(Z)) \simeq \Bifib(Y,Z) \simeq \Fun(Z,\RFib(Y)), \]
where $\Bifib(Y,Z) \subseteq \Ortho(Y,Z)$ denotes the full subcategory of bifibrations, and $\LFib(Z)$ and $\RFib(Y)$ denote the categories of left fibrations over $Z$ and right fibrations over $Y$, respectively.

\begin{ex}\label{ex:arrow-cat}
    The functor $(\ev_1,\ev_0) \colon \Fun([1]^\op,X) \to X$ is a bifibration for every category $X$---the special case $X=\catinf$ was covered in \cref{prop:aropl-orthofib}.
    Moreover, this functor straightens to the functor
    \[ \Hom_X \colon X^\op \times X \to \Spc. \]
    This follows for example from \cite[Corollary~A.2.5]{hms:shifted} because the cartesian unstraightening of $\Hom_X$ is the twisted arrow category.
\end{ex}

\begin{defn}
    Let $p = (p_1,p_2) \colon X \to Y \times Z$ be an orthofibration.
    Define $X_\bicart$ as the wide subcategory of $X$ generated by the collections of $p_1$-cartesian and $p_2$-cocartesian morphisms.
\end{defn}

\begin{lem}\label{lem:underlying-bifib}
    Let $p = (p_1,p_2) \colon X \to Y \times Z$ be an orthofibration.
    \begin{enumerate}
        \item\label{lem:underlying-bifib-1} The following are equivalent for a morphism $f$ in $X$:
         \begin{enumerate}
             \item $f$ lies in $X_\bicart$;
             \item $f$ is the composition of a $p_1$-cartesian morphism followed by a $p_2$-cocartesian morphism;
             \item $f$ is the composition of a $p_2$-cocartesian morphism followed by a $p_1$-cartesian morphism.
         \end{enumerate}
        \item\label{lem:underlying-bifib-2} The restriction $p_\bicart \colon X_\bicart \to Y \times Z$ of $p$ is a bifibration. The inclusion functors $X_\bicart \to X$ assemble to the counit transformation of an adjunction
        \[ \inc \colon \Bifib(Y,Z) \rightleftarrows \Ortho(Y,Z) \cocolon (-)_\bicart. \]
        \item\label{lem:underlying-bifib-3} The bifibration $p_\bicart$ straightens to the functor
        \[ Y^\op \times Z \xrightarrow{\St(p)} \catinf \xrightarrow{\iota} \Spc. \]
    \end{enumerate}
\end{lem}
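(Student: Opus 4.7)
My plan is to prove the lemma in the order \eqref{lem:underlying-bifib-1}, \eqref{lem:underlying-bifib-2}, \eqref{lem:underlying-bifib-3}, with \eqref{lem:underlying-bifib-1} providing the combinatorial heart and the remaining parts following by formal considerations. For \eqref{lem:underlying-bifib-1}, I would introduce the wide subcategories $X_{(b)}, X_{(c)} \subseteq X$ of morphisms admitting a factorisation as in (b), respectively (c). Each of them contains all $p_1$-cartesian and $p_2$-cocartesian morphisms (taking one factor to be an identity), and each is tautologically contained in $X_\bicart$. The remaining task is to show that both $X_{(b)}$ and $X_{(c)}$ are closed under composition, which yields $X_\bicart = X_{(b)} = X_{(c)}$. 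The key input is a swap lemma: given composable morphisms $a \xrightarrow{c} \tilde{x} \xrightarrow{d} e$ with $c$ being $p_1$-cartesian over $g \colon y \to y'$ and $d$ being $p_2$-cocartesian over $h \colon z \to z'$, the composite $d \circ c$ can be rewritten as $c' \circ g^*(d)$ with $g^*(d)\colon a \to g^*(e)$ a $p_2$-cocartesian morphism and $c'\colon g^*(e) \to e$ a $p_1$-cartesian morphism. The orthofibration hypothesis, invoked via \cref{lem:preserve-lifts}, ensures that the cartesian transport functor $g^*\colon p_1^{-1}(y') \to p_1^{-1}(y)$ carries $d$ (which lies in $p_1^{-1}(y')$ because $p_1(d)$ is an equivalence) to a $p_2$-cocartesian morphism, and the naturality square of the cartesian counits completes the identification. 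The dual swap is obtained by applying \cref{lem:preserve-lifts} in the other direction, and iterated swaps reduce any cart--cocart word to a two-step expression in either canonical form.

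For \eqref{lem:underlying-bifib-2}, I would use \eqref{lem:underlying-bifib-1} to characterise the (co)cartesian morphisms inside $X_\bicart$. Writing $f \in X_\bicart$ in the form $f = c \circ d$ with $d$ cocartesian and $c$ cartesian, the fact that $p_2(c)$ is an equivalence gives $p_2(f) \simeq p_2(d)$, so that $p_2(f)$ is an equivalence if and only if $d$ is an equivalence, if and only if $f$ is itself $p_1$-cartesian. Combined with the trivial observation that $p_1$-cartesian lifts from $X$ belong to $X_\bicart$, this exhibits $p_1|_{X_\bicart}$ as a cartesian fibration whose cartesian morphisms are precisely those projecting to equivalences under $p_2$. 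A symmetric argument for $p_2|_{X_\bicart}$ completes the proof that $p_\bicart$ is a bifibration. For the adjunction, any morphism $F\colon W \to X$ in $\Ortho(Y, Z)$ with bifibration source $W$ factors through $X_\bicart$: applying \eqref{lem:underlying-bifib-1} to $W$ itself writes every morphism of $W$ as a composite of cartesian and cocartesian morphisms, whose images land in $X_\bicart$. The inclusion $X_\bicart \hookrightarrow X$ thus supplies the counit of the adjunction.

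For \eqref{lem:underlying-bifib-3}, I would combine \eqref{lem:underlying-bifib-2} with the straightening equivalences~\eqref{eq:straighten-orthofibs} and their bifibration counterpart $\Bifib(Y,Z) \simeq \Fun(Y^\op \times Z, \Spc)$. Under these equivalences, the inclusion $\Bifib(Y,Z) \hookrightarrow \Ortho(Y,Z)$ is identified with postcomposition along $\Spc \hookrightarrow \catinf$, so by uniqueness of adjoints its right adjoint $(-)_\bicart$ must correspond to postcomposition along the right adjoint $\iota \colon \catinf \to \Spc$. Applied to the orthofibration $p$, this yields the asserted equivalence $\St(p_\bicart) \simeq \iota \circ \St(p)$. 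The principal obstacle is the swap lemma in \eqref{lem:underlying-bifib-1}; the remainder of the argument is a formal consequence of the straightening adjunctions and the characterisation of (co)cartesian morphisms.
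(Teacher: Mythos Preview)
Your proposal is correct and follows essentially the same route as the paper: in \eqref{lem:underlying-bifib-1} you spell out the swap argument that the paper simply imports from \cite[Definition~2.3.10]{hhln:lax-adjunctions}, and your treatment of \eqref{lem:underlying-bifib-3} via passing to right adjoints along the straightening equivalences is identical to the paper's. The one step in \eqref{lem:underlying-bifib-2} that deserves an extra word is the passage from ``$f \in X_\bicart$ is $p_1$-cartesian iff $p_2(f)$ is an equivalence'' to ``$p_1|_{X_\bicart}$ is a cartesian fibration'': you still need to verify that a $p_1$-cartesian morphism $\xi$ remains cartesian for the restricted functor, i.e.\ that $\xi \circ \alpha \in X_\bicart$ forces $\alpha \in X_\bicart$, which the paper checks directly using \eqref{lem:underlying-bifib-1} together with cancellation for cartesian morphisms \cite[Lemma~2.4.2.7]{HTT}.
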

\begin{proof}
    By \cite[Definition~2.3.10]{hhln:lax-adjunctions}, $p_1$-cartesian morphisms canonically commute with $p_2$-cocartesian morphisms, which shows assertion~\eqref{lem:underlying-bifib-1}.

    For assertion~\eqref{lem:underlying-bifib-2}, let us first check that $(p_\bicart)_1 \colon X_\bicart \to Y$ is a cartesian fibration.
    Every morphism in $Y$ admits a $p_1$-cartesian lift, so this reduces to checking that for a $p_1$-cartesian morphism $\xi \colon x \to x'$ in $X$, an arbitrary morphism $\alpha \colon a \to x$ lies in $X_\bicart$ if and only if $\xi \circ \alpha$ lies in $X_\bicart$.
    Writing $\alpha$ as the composite of a $p_2$-cocartesian morphism followed by a $p_1$-cartesian morphism, this is immediate from \cite[Lemma~2.4.2.7]{HTT}.
    In particular, every $(p_\bicart)_1$-cartesian morphism projects to an equivalence under $(p_\bicart)_2$.

    Suppose now that $\xi \colon x \to x'$ is a morphism in $X_\bicart$ such that $p_2(\xi)$ is an equivalence.
    Writing $\xi = \xi_\cocart \circ \xi_\cart$ as a composition of a $p_1$-cartesian morphism followed by a $p_2$-cocartesian morphism, it follows that $p_2(\xi_\cocart)$ is an equivalence.
    Hence $\xi_\cocart$ is a $p_2$-cocartesian lift of an equivalence, and thus an equivalence.
    It follows that $\xi$ is $p_1$-cartesian, and therefore also $(p_\bicart)_1$-cartesian.

    By dualising, we see that $(p_\bicart)_2 \colon X \to Z$ is a cocartesian fibration such that a morphism is $(p_\bicart)_2$-cocartesian if and only if it projects to an equivalence under $(p_\bicart)_1$.
    Hence $p_\bicart$ is a bifibration.

    Since morphisms in $\Ortho(Y,Z)$ preserve all relevant cartesian and cocartesian morphisms, it is immediate that the inclusion $X_\bicart \to X$ induces an equivalence
    \[ \Hom_{\Bifib(Y,Z)}(T,X_\bicart) \xrightarrow{\sim} \Hom_{\Ortho(Y,Z)}(T,X) \]
    for every bifibration $T \to Y \times Z$.
    
    Assertion~\eqref{lem:underlying-bifib-3} follows from the commutative diagram
    \[\begin{tikzcd}
        \Bifib(Y,Z)\ar[r, "\sim"]\ar[d, "\inc"] & \Fun(Z, \RFib(Y))\ar[r, "\sim"]\ar[d, "\inc"] & \Fun(Z, \Fun(Y^\op,\Spc))\ar[d, "\inc"] \\
        \Ortho(Y,Z)\ar[r, "\sim"] & \Fun(Z, \Cart(Y))\ar[r, "\sim"] & \Fun(Z,\Fun(Y^\op,\catinf))
    \end{tikzcd}\]
    by passing to right adjoints.
\end{proof}

Finally, recall that slice categories of $\Op$ are cotensored over $\catinf$ as follows.
For $I$ a small category and $\phi \colon \cX^\otimes \to \cB^\otimes$ an operad map, define $\Fun(I,\cX)^\otimes$ as the pullback
\[\begin{tikzcd}
    \Fun(I,\cX)^\otimes\ar[r]\ar[d] & \Fun(I,\cX^\otimes)\ar[d, "\phi_*"] \\
    \cB^\otimes\ar[r, "\con"] & \Fun(I,\cB^\otimes)
\end{tikzcd}\]
This operad has the universal property that
\[ \Alg_{\cA/\cB}(\Fun(I,\cX)) \simeq \Fun(I, \Alg_{\cA/\cB}(\cX)) \]
for every operad $\cA^\otimes \to \cB^\otimes$ over $\cB^\otimes$ \cite[Remark~2.1.3.4]{HA}.

If $\pi \colon \cX^\otimes \to \cX$ is a cartesian structure, one checks directly that
\[ \Fun(I,\cX)^\otimes \to \Fun(I,\cX^\otimes) \xrightarrow{\pi \circ - } \Fun(I,\cX) \]
exhibits $\Fun(I,\cX)^\otimes$ as a cartesian structure on $\Fun(I,\cX)$.
In particular, one obtains the cartesian symmetric monoidal structure
\[ \Fun(I,\Ar^\oplax)^\times \to \Comm^\otimes \]
by applying this construction to the operad $(\Ar^\oplax)^\times \to \Comm^\otimes$.

\subsection*{Proof of \texorpdfstring{\cref{thm:main}}{the main theorem}}
We can now finish the proof of our main result.
By \cref{prop:alg-s-t-orthofibration}, the functor
\[ (s_*,t_*) \colon \Alg_\cO(\Ar^\oplax) \to \Alg_\cO(\catinf) \times \Alg_\cO(\catinf) \]
is an orthofibration.
In combination with \cref{prop:aropl-orthofib}, we obtain a characterisation of the $s_*$-cartesian morphisms as those morphisms which map under
\[ \Alg_\cO(\Ar^\oplax) \to \Fun(\cO^\otimes,\Ar^\oplax) \]
to a natural transformation whose components all preserve cartesian morphisms and project to an equivalence under $t \colon \Ar^\oplax \to \catinf$.
Analogously for $t_*$-cocartesian morphisms.
    
We will identify the composite of the straightening of $(s_*,t_*)$ with the Yoneda embedding $\yo \colon \catinf \to \cP(\catinf)$.
By \cref{lem:orthofib-cotensor}, the composite
\begin{align*}
    \catinf^\op
    &\xrightarrow{(s_*,t_*)^{(-)}} \Ortho(\Alg_\cO(\catinf),\Alg_\cO(\catinf)) \\
    &\xrightarrow{\St} \Fun(\Alg_\cO(\catinf)^\op \times \Alg_\cO(\catinf),\catinf) \\
    &\xrightarrow{\iota \circ -} \Fun(\Alg_\cO(\catinf)^\op,\Spc)
\end{align*}
corresponds to $\yo \circ \St(s_*,t_*)$ after currying.
By virtue of \cref{lem:underlying-bifib}, the composite $(\iota \circ -) \circ \St$ is equivalent to the functor that first applies $(-)_\bicart$ and then straightens the resulting bifibration.
This leaves us with identifying the bifibrations $(s_*,t_*)^I_\bicart$.

Consider the natural fully faithful functor
\begin{align*}
    \Psi' \colon \Fun(I,\Alg_\cO(\Ar^\oplax))
    &\simeq \Alg_\cO(\Fun(I,\Ar^\oplax)) \\
    &\xrightarrow{\sim} \Mon_\cO(\Fun(I,\Ar^\oplax)) \\
    &\subseteq \Fun(\cO^\otimes, \Fun(I,\Ar^\oplax)) \simeq \Fun(\cO^\otimes \times I, \Ar^\oplax).
\end{align*}
Then $\Fun(I,s_*)$-cartesian morphisms in the domain correspond precisely to those natural transformations $\tau$ in the target with the property that for all $x \in \cO^\otimes$ and $i \in I$, the functor $\tau(x,i)$ preserves cartesian morphisms over $[1]$ and $t(\tau(x,i))$ is an equivalence.
From the analogous assertion for $\Fun(I,t_*)$-cocartesian morphisms, it follows that $\Psi'$ restricts to a fully faithful functor
\[ \Fun(I,\Alg_\cO(\Ar^\oplax))_\bicart \to \Fun(\cO^\otimes \times I, \Ar^\oplax)^\cart. \]
Composing with the natural equivalence of \cref{prop:straighten-curved-ortho}, we obtain a fully faithful functor
\[ \Psi \colon \Fun(I,\Alg_\cO(\Ar^\oplax))_\bicart \to \Fun([1]^\op, \Cocart^\lax(\cO^\otimes \times I))^\cocart. \]

\begin{lem}\label{lem:identify-monoids}
    The essential image of $\Psi$ comprises of those functors
    \[ E \colon [1]^\op \to \Cocart^\lax(\cO^\otimes \times I) \]
    satisfying the following conditions:
    \begin{enumerate}
        \item for $k=0,1$ and $i \in I$, the functor $E(k)_i \to \cO^\otimes \times \{i\}$ is a cocartesian fibration of operads;
        \item the functor $E(1)_i \to E(0)_i$ preserves inert morphisms for every $i \in I$.
    \end{enumerate}
\end{lem}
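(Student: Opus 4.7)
The plan is to make the underlying equivalences defining $\Psi$ explicit on objects, and then to reduce the essential-image question to the question of when the relevant Segal maps are equivalences in $\Ar^\oplax$.

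First I would unpack $\Psi$. An object of $\Fun(I, \Alg_\cO(\Ar^\oplax))$ corresponds to a functor $\tilde F \colon \cO^\otimes \times I \to \Ar^\oplax$ such that, for each $i \in I$, the restriction $\tilde F|_{\cO^\otimes \times \{i\}}$ satisfies the Segal condition in $\Ar^\oplax$. Via the equivalence of \cref{prop:straighten-curved-ortho}, this $\tilde F$ corresponds to the object $E \in \Fun([1]^\op, \Cocart^\lax(\cO^\otimes \times I))$ where, for $k \in \{0, 1\}$, the cocartesian fibration $E(k) \to \cO^\otimes \times I$ is the cocartesian unstraightening of $(x, i) \mapsto \tilde F(x, i)_k$ (the fiber of the cartesian fibration $\tilde F(x, i) \to [1]$ over $k$), while the map $E(1) \to E(0)$ over $\cO^\otimes \times I$ corresponds to the fiberwise cartesian transport of $\tilde F$.

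Next I would analyze when a Segal map $\tilde F(x, i) \to \prod_j \tilde F(x_j, i)$ is an equivalence in $\Ar^\oplax = \Cart^\oplax([1])$. A morphism between cartesian fibrations $X, Y \to [1]$ in $\Cart^\oplax([1])$ is, by straightening, the data of functors $f_k \colon X_k \to Y_k$ for $k = 0, 1$ together with an oplax natural transformation $\eta \colon f_0 \circ T_X \Rightarrow T_Y \circ f_1$ comparing the cartesian transports; such a datum is an equivalence iff $f_0$ and $f_1$ are equivalences and $\eta$ is invertible. Applied to the Segal maps, this shows that $\tilde F|_{\cO^\otimes \times \{i\}}$ is an $\cO$-monoid in $\Ar^\oplax$ if and only if (i) the fiberwise Segal maps $\tilde F(x, i)_k \to \prod_j \tilde F(x_j, i)_k$ are equivalences for $k = 0, 1$ and every Segal tuple, and (ii) the cartesian transports of $\tilde F(x, i)$ and $\prod_j \tilde F(x_j, i)$ are compatible through the Segal equivalences.

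It remains to identify (i) with Condition~(1) and (ii) with Condition~(2). Condition~(i) asserts exactly that each $E(k)_i \to \cO^\otimes$ satisfies the operadic Segal condition, i.e.~is a cocartesian fibration of operads. For Condition~(ii), once (i) holds, the cocartesian lift of an inert morphism $\alpha \colon x \to x'$ in $E(k)_i$ is identified under the Segal equivalences with the projection onto the factors picked out by $\alpha$. Consequently, compatibility of cartesian transport with the Segal decomposition translates precisely into the statement that $E(1)_i \to E(0)_i$ sends these projections in $E(1)_i$ to the corresponding projections in $E(0)_i$, which is preservation of inert morphisms. The main obstacle will be the straightening bookkeeping in the previous paragraph, namely the extraction of the oplax datum $\eta$ from a morphism in $\Cart^\oplax([1])$ and its identification with the transport data of $\tilde F$; once this is in hand, both translations are immediate and the lemma follows.
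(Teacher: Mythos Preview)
Your proposal is correct and follows essentially the same route as the paper: reduce to pointwise $\cO$-monoids, split the Segal equivalence condition into a fibrewise part (giving condition~(1)) and a cartesian-preservation part (giving condition~(2)). The only difference is packaging of the last step: where you extract the oplax comparison $\eta$ and argue that its invertibility is equivalent to preservation of cocartesian lifts of inerts, the paper instead invokes \cref{lem:preserve-lifts} applied to the curved orthofibration over $[1] \times \cO^\otimes$, which is exactly the ``straightening bookkeeping'' you flag as the main obstacle.
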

\begin{proof}
    A functor $M \colon \cO^\otimes \to \Fun(I,\Ar^\oplax)$ is an $\cO$-monoid if and only if the evaluation $M_i \colon \cO^\otimes \to \Ar^\oplax$ is an $\cO$-monoid for all $i \in I$.
    Recall that $M_i$ being an $\cO$-monoid means that for every $x = x_1 \boxtimes \ldots \boxtimes x_n \in \cO^\otimes_{\langle n \rangle}$, the appropriate inert morphisms induce equivalences
    \[\rho \colon  M_i(x) \xrightarrow{\sim} M_i(x_1) \mathop{\times}\limits_{[1]} \ldots \mathop{\times}\limits_{[1]} M_i(x_n) \]
    of cartesian fibrations over $[1]$.
    This is the case if and only if each $\rho$ preserves cartesian morphisms and induces fibrewise equivalences.

    Denote by $E \colon [1]^\op \to \Cocart(\cO^\otimes \times I)$ the image of $M$ under $\Psi$, and let $E(k)_i$ be the restriction of $E(k)$ to $\cO^\otimes \times \{i\}$.
    The cocartesian fibration $E(k)_i \to \cO^\otimes$ is given by the unstraightening of the composite $\cO^\otimes \to \Fun(I,\Ar^\oplax) \xrightarrow{ev_i} \Ar^\oplax \xrightarrow{(-)|_{\{k\}}} \catinf$, which is an $\cO$-monoid because $\ev_i$ preserves products.
    Consequently, each map $\rho$ is a fibrewise equivalence if and only if both $E(0)_i$ and $E(1)_i$ are cocartesian fibrations of operads \cite[Example~2.4.2.4]{HA}.
    Since inert morphisms in $E(k)_i$ are precisely the cocartesian lifts of inert morphisms in $\cO^\otimes \times \{i\}$, \cref{lem:preserve-lifts} shows that $\rho$ preserves all cartesian morphisms if and only if $E(1)_i \to E(0)_i$ preserves all inert morphisms.
\end{proof}

Note that $\Psi$ fits into a natural commutative diagram
\[\begin{tikzcd}
    \Fun(I,\Alg_\cO(\Ar^\oplax))_\cart\ar[r, "\Psi"]\ar[d, "{(s,t)}"'] & \Fun([1]^\op, \Cocart^\lax(\cO^\otimes \times I))^\cocart\ar[d, "{(\ev_1,\ev_0)}"] \\
    \Fun(I,\Alg_\cO(\catinf)) \times \Fun(I,\Alg_\cO(\catinf))\ar[r, "\Phi"] & \Cocart(\cO^\otimes \times I) \times \Cocart(\cO^\otimes \times I)
\end{tikzcd}\]
with both $\Phi$ and $\Psi$ fully faithful.
The essential image of $\Phi$ comprises precisely of those pairs of functors whose restriction to $\cO^\otimes \times \{i\}$ is a cocartesian fibration of operads for every $i \in I$.
In particular, this induces a natural fully faithful functor $\Psi_I$ from $\Alg_\cO(\Ar^\oplax)^I_\bicart$ to the pullback of
\[\begin{tikzcd}[column sep=5em]
    & \Fun([1]^\op, \Cocart^\lax(\cO^\otimes \times I))^\cocart\ar[d, "{(\ev_1,\ev_0) =: \epsilon}"] \\
    \Cocart(\cO^\otimes) \times \Cocart(\cO^\otimes)\ar[r, "(- \times I) \times (- \times I)"] & \Cocart(\cO^\otimes \times I) \times \Cocart(\cO^\otimes \times I)
\end{tikzcd}\]
The right vertical evaluation functor is the pullback of
\[ (\ev_1,\ev_0) \colon \Fun([1]^\op, \Cocart^\lax(\cO^\otimes \times I)) \to \Cocart^\lax(\cO^\otimes \times I) \times \Cocart^\lax(\cO^\otimes \times I), \]
along the inclusion functor
\[ \Cocart(\cO^\otimes \times I) \times \Cocart(\cO^\otimes \times I) \to \Cocart^\lax(\cO^\otimes \times I) \times \Cocart^\lax(\cO^\otimes \times I). \]
Consequently, the naturality of unstraightening together with \cref{ex:arrow-cat} implies that this pullback of $\epsilon$ straightens to the functor
\[ \Hom_{\Cocart^\lax(\cO^\otimes \times I)}(- \times I, - \times I) \colon \Cocart(\cO^\otimes) \times \Cocart(\cO^\otimes) \to \Spc. \]
Observe in addition that
\[ \Hom_{\Cocart^\lax(\cO^\otimes \times I)}(- \times I, - \times I) \simeq \Hom_{\Cocart^\lax(\cO^\otimes)}(- \times I,-). \]
It follows from \cref{lem:identify-monoids} that $\Psi_I$ identifies the straightening of $\Alg_\cO(\Ar^\oplax)^I_\bicart$ with the full subfunctor of $\Hom_{\Cocart^\lax(\cO^\otimes)}(- \times I,-)$ given by those functors $\cX^\otimes \times I \to \cY^\otimes$ such that $\cX^\otimes \times \{i\} \to \cY^\otimes$ is an operad map for all $i \in I$.
After currying, we obtain a natural equivalence
\[ \St((s_*,t_*)^I_\bicart) \simeq \Hom_\catinf(I, \Alg_{\cX/\cO}(\cY)), \]
which is precisely what we needed to show.
\cref{thm:main} is now proved.


\section{Variation: Day convolution in suboperads of \texorpdfstring{$\catinf^\times$}{the cartesian symmetric monoidal structure on Cat}}\label{sec:variations}
From the preceding results, one can deduce analogous assertions for certain symmetric monoidal categories which arise as suboperads of $\catinf^\times$.
Consider a subcategory $\cU$ of $\catinf$ which is closed under finite products.
If $U_1$, $U_2$ and $T$ are objects in $\cU$, call a functor $F \colon U_1 \times U_2 \to T$ \emph{$\cU$-biexact} if both $F(u_1,-) \colon U_2 \to T$ and $F(-,u_2) \colon U_1 \to T$ are morphisms in $\cU$ for all $u_1 \in U_1$ and $u_2 \in U_2$.
There is an evident notion of a \emph{$\cU$-multiexact} functor for functors in more than two variables.

Assume that
\begin{enumerate}
    \item for each pair $U_1$ and $U_2$ of objects in $\cU$, there exists an initial $\cU$-biexact functor $U_1 \times U_2 \to U_1 \otimes U_2$;
    \item there exists a category $U \in \cU$ and an object $u \in U$ such that evaluation at $u$ induces an equivalence $\Hom_\cU(U,T) \xrightarrow{\sim} \iota T$.
\end{enumerate}
One example of a subcategory satisfying these conditions is the category $\catst$ of stable categories and exact functors.

Under these assumptions, $\cU$ refines to a symmetric monoidal category by considering the suboperad $\cU^\otimes$ of $\catinf^\times$ determined by the following conditions:
\begin{enumerate}
    \item the underlying category of $\cU^\otimes$ is $\cU$;
    \item morphisms $U_1 \boxtimes \ldots \boxtimes U_n \to T$ over the active map $\langle n \rangle \to \langle 1 \rangle$ correspond to $\cU$-multiexact functors $U_1 \times \ldots \times U_n \to T$.
\end{enumerate}
Observe that $\cO$-algebras in $\cU^\otimes$ correspond under unstraightening to cocartesian fibrations of operads over $\cO^\otimes$ whose fibres lie in $\cU$ and whose cocartesian transport functors are $\cU$-multiexact.

Consider now the suboperad $(\Ar^\oplax_\cU)^\otimes$ of $(\Ar^\oplax)^\times$ determined by the following properties:
\begin{enumerate}
    \item objects in the underlying category $\Ar^\oplax_\cU$ are given by cartesian fibrations $X \to [1]$ which straighten to functors $[1]^\op \to \cU$;
    \item morphisms are precisely those morphisms in $(\Ar^\oplax)^\times$ which map to the suboperad $\cU^\otimes \times_{\Comm^\otimes} \cU^\otimes$ under $(s,t)$.
\end{enumerate}
As before, given $\cO$-algebras $\cC$ and $\cD$ in $\cU$, we let $\lslice{\cU}{\cC}{}^\otimes$ and $\lslice{\cU}{}{\cD}^\otimes$ be given by the following pullbacks:
\[\begin{tikzcd}
    \lslice{\cU}{\cC}{}^\otimes\ar[r]\ar[d] & (\Ar^\oplax_\cU)^\otimes\ar[d, "{(s,t)}"] \\
    \cO^\otimes \times_{\Comm^\otimes} \cU^\otimes\ar[r, "\cC \times \id"] & \cU^\otimes \times_{\Comm^\otimes} \cU^\otimes
\end{tikzcd}
\quad
\begin{tikzcd}
    \lslice{\cU}{}{\cD}^\otimes\ar[r]\ar[d] & (\Ar^\oplax_\cU)^\otimes\ar[d, "{(s,t)}"] \\
    \cU^\otimes \times_{\Comm^\otimes} \cO^\otimes\ar[r, "\id \times \cD"] & \cU^\otimes \times_{\Comm^\otimes} \cU^\otimes
\end{tikzcd}\]
As a final piece of notation, denote by $\Alg_{\cC/\cO}^\cU(\cD)$ the full subcategory of $\Alg_{\cC/\cO}(\cD)$ spanned by those operad maps $\cC^\otimes \to \cD^\otimes$ over $\cO^\otimes$ such that $\cC^\otimes_x \to \cD^\otimes_x$ is a morphism in $\cU$ for every $x \in \cO^\otimes$.

\begin{prop}\label{prop:alg-aropl-s-straighten}
    Let $p \colon \cC^\otimes \to \cO^\otimes$ and $q \colon \cD^\otimes \to \cO^\otimes$ be cocartesian fibrations of operads corresponding to $\cO$-algebras in $\cU$. 
    \begin{enumerate}
        \item\label{prop:alg-aropl-s-straighten-1} The functor
        \[ (s_*,t_*) \colon \Alg_{\cO}(\Ar^\oplax_\cU) \to \Alg_\cO(\cU) \times \Alg_\cO(\cU) \]
        is an orthofibration which straightens to the functor
        \[ (\cX,\cY) \mapsto \Alg_{\cX/\cO}^\cU(\cY).\]
        \item\label{prop:alg-aropl-s-straighten-2} For every operad $\cA^\otimes \to \cO^\otimes$ over $\cO^\otimes$, the functor
        \[ t_\cC \colon \Alg_{\cA/\cO}(\lslice{\cU}{\cC}{}) \to \Alg_\cA(\cU) \]
        is a cocartesian fibration which straightens to the functor
        \[ \cY \mapsto \Alg_{\cA \times_\cO \cC/\cA}^\cU(\cY). \]
        \item\label{prop:alg-aropl-s-straighten-3} For every operad $\cA^\otimes \to \cO^\otimes$ over $\cO^\otimes$, the functor
        \[ s_\cD \colon \Alg_{\cA/\cO}(\lslice{\cU}{}{\cD}) \to \Alg_\cA(\cU) \]
        is a cartesian fibration which straightens to the functor
        \[ \cX \mapsto \Alg_{\cX/\cO}^\cU(\cD). \]
    \end{enumerate}
\end{prop}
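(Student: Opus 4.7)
The plan is to prove part~\eqref{prop:alg-aropl-s-straighten-1} by restricting the orthofibration of \cref{thm:main} to the $\cU$-setting, and then deduce parts~\eqref{prop:alg-aropl-s-straighten-2} and \eqref{prop:alg-aropl-s-straighten-3} by a fibre-extraction argument mirroring the corollary to \cref{thm:day-improved}.

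For part~\eqref{prop:alg-aropl-s-straighten-1}, the suboperad definition of $(\Ar^\oplax_\cU)^\otimes$ identifies $\Alg_\cO((\Ar^\oplax_\cU)^\otimes)$ with the subcategory of $\Alg_\cO(\Ar^\oplax)$ consisting of those $\cO$-algebras $M$ such that each value $M(x)$ straightens to a functor $[1]^\op \to \cU$ and whose multi-structure maps have both source and target components $\cU$-multi-exact. The key technical point is to check that the orthofibration $(s_*, t_*)$ of \cref{prop:alg-s-t-orthofibration} restricts to this subcategory. Given the $s_*$-cartesian lift of a morphism $\cX \to s_*(M')$ in $\Alg_\cO(\cU)$ with $M' \in \Alg_\cO(\Ar^\oplax_\cU)$, the componentwise description of such lifts combined with the explicit formula in $\Cart([1])$ from \cref{prop:aropl-orthofib} shows that $M(x)$ straightens to the functor with source $\cX(x)$, target $t(M'(x))$, and structure map obtained by composing $\cX(x) \to s(M'(x))$ with the structure map of $M'(x)$; this stays in $\cU$ because $\cU$ is closed under composition, and the multi-structure maps of $M$ inherit $\cU$-multi-exactness from $\cX$ on the source side and from $M'$ on the target side. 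The analogous verification for $t_*$-cocartesian lifts is symmetric.

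By \cref{thm:main} the fibre of the ambient orthofibration over $(\cX, \cY) \in \Alg_\cO(\catinf)^2$ is $\Alg_{\cX/\cO}(\cY)$, and the fibre of the restricted orthofibration over $(\cX, \cY) \in \Alg_\cO(\cU)^2$ is by construction the full subcategory $\Alg^\cU_{\cX/\cO}(\cY)$. Naturality of straightening with respect to base change therefore identifies the straightening of the restriction with $(\cX, \cY) \mapsto \Alg^\cU_{\cX/\cO}(\cY)$, proving~\eqref{prop:alg-aropl-s-straighten-1}. For parts~\eqref{prop:alg-aropl-s-straighten-2} and \eqref{prop:alg-aropl-s-straighten-3}, I apply~\eqref{prop:alg-aropl-s-straighten-1} to the base operad $\cA^\otimes$ and unwind the pullback definitions of $\lslice{\cU}{\cC}{}^\otimes$ and $\lslice{\cU}{}{\cD}^\otimes$: this exhibits $\Alg_{\cA/\cO}(\lslice{\cU}{\cC}{})$ and $\Alg_{\cA/\cO}(\lslice{\cU}{}{\cD})$ as the fibres of $s_*$ and $t_*$ over $\cA \times_\cO \cC$ and $\cA \times_\cO \cD$, respectively. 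Restricting an orthofibration to such a fibre yields a cocartesian (respectively cartesian) fibration in the remaining variable whose straightening is the corresponding slice of the two-variable straightening; for~\eqref{prop:alg-aropl-s-straighten-3}, the natural identification $\Alg^\cU_{\cX/\cA}(\cA \times_\cO \cD) \simeq \Alg^\cU_{\cX/\cO}(\cD)$, immediate from the pullback definition of $\cA \times_\cO \cD$, brings the straightening into the claimed form. The main obstacle will be the restriction step in~\eqref{prop:alg-aropl-s-straighten-1}: one must carefully match the componentwise formulas for cartesian and cocartesian lifts against the $\cU$-conditions both on underlying objects and on multi-structure maps, a matching which ultimately reduces to closure of $\cU$ under composition and finite products of multi-exact functors.
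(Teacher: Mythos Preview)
Your proposal is correct and follows essentially the same approach as the paper: restrict the orthofibration of \cref{thm:main} to the $\cU$-setting by checking that (co)cartesian transport preserves the relevant subcategories, identify the fibres as $\Alg^\cU_{\cX/\cO}(\cY)$, and deduce parts~\eqref{prop:alg-aropl-s-straighten-2} and~\eqref{prop:alg-aropl-s-straighten-3} by fibre extraction. The only organisational difference is that the paper separates the restriction into two steps---first pulling back along $\cU^\otimes \times_{\Comm^\otimes} \cU^\otimes \to \catinf^\times \times_{\Comm^\otimes} \catinf^\times$ (which handles the non-full base change automatically), then passing to the full subcategory on objects straightening to $[1]^\op \to \cU$---whereas you perform both at once; this makes the paper's verification slightly cleaner but does not change the substance.
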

\begin{proof}
    The operad $(\Ar^\oplax_\cU)^\otimes$ can be constructed in two steps.
    Consider first the pullback
    \[\begin{tikzcd}
        (\widetilde{\Ar^\oplax_\cU})^\otimes\ar[r]\ar[d] & (\Ar^\oplax)^\times\ar[d, "{(s,t)}"] \\
        \cU^\otimes \times_{\Comm^\otimes} \cU^\otimes\ar[r] & \catinf^\times \times_{\Comm^\otimes} \catinf^\times
    \end{tikzcd}\]
    Then $(\Ar^\oplax_\cU)^\otimes$ is the full subcategory of $(\widetilde{\Ar^\oplax_\cU})^\otimes$ spanned by objects corresponding to tuples of cartesian fibrations over $[1]$, each of which straightens to a functor $[1]^\op \to \cU$.
    Consequently, \cref{thm:main} implies that
    \[ (\tilde{s}_*,\tilde{t}_*) \colon \Alg_{\cO}\Big(\widetilde{\Ar^\oplax_\cU}\Big) \to \Alg_\cO(\cU) \times \Alg_\cO(\cU) \]
    is an orthofibration which straightens to the functor
    \[ (\cX,\cY) \mapsto \Alg_{\cX/\cO}(\cD). \]
    By construction, the fibre of $(\Ar^\oplax_\cU)^\otimes$ over $(\cX,\cY)$ is precisely the full subcategory $\Alg_{\cX/\cA}^\cU(\cY)$, and both the $\tilde{s}_*$-cartesian and $\tilde{t}_*$-cocartesian transport functors along morphisms in $\Alg_\cO(\cU)$ preserve these full subcategories.
    This identifies $\St(s_*,t_*)$ as the correct subfunctor.

    Assertions~\eqref{prop:alg-aropl-s-straighten-2} and \eqref{prop:alg-aropl-s-straighten-3} follow as before from \eqref{prop:alg-aropl-s-straighten-1}.
\end{proof}


\section{\texorpdfstring{Day convolution as an $\cO$-monoidal category}{Day convolution as an O-monoidal category}}\label{sec:day-o-monoidal}

Fix a base operad $\cO^\otimes$ as well as $\cO$-monoidal categories $\cC$ and $\cD$.
In this section, we reprove a well-known statement, see \cite[Proposition~2.2.6.16]{HA}, which is key for working with the Day convolution operad.
We include a proof to demonstrate that $\Day_{\cC,\cD}^\otimes$ is a feasible description of Day convolution.

For every operation $\phi \in \mathrm{Mul}_\cO(\{x_i\}_i,y)$, denote the associated tensor functors by $\otimes^\cC_\phi \colon \prod_i \cC(x_i) \to \cC(y)$ and $\otimes^\cD_\phi \colon \prod_i \cD(x_i) \to \cD(y)$.

\begin{prop}\label{prop:day-o-monoidal}
    For each $y \in \cO$, consider the following collection of slice categories:
    \[ \cK(y) := \left\{ \otimes^\cC_\psi/c \mid \psi \in \mathrm{Mul}_\cO(\{x_i\}_i,y),\ c \in \cC(y) \right\} \]
    Assume the following is true:
    \begin{enumerate}
        \item\label{prop:day-o-monoidal-1} for all $y \in \cO$, the category $\cD(y)$ admits all $\cK(y)$-shaped colimits;
        \item\label{prop:day-o-monoidal-2} for every operation $\phi \in \mathrm{Mul}_\cO(\{x_i\}_i,y)$ and every $j$, the associated tensor functor $\otimes^\cD_\phi \colon \prod_i \cD(x_i) \to \cD(y)$ preserves all $\cK(x_j)$-shaped colimits in the $j$-th component.
    \end{enumerate}
    Then $\Day_{\cC,\cD}^\otimes \to \cO^\otimes$ is a cocartesian fibration of operads.
\end{prop}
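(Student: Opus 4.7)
The plan is to construct, for each active multi-operation $\phi \in \mathrm{Mul}_\cO(\{x_i\}_i,y)$, a cocartesian lift in $\Day^\otimes_{\cC,\cD}$ explicitly as a pointwise left Kan extension along $\otimes^\cC_\phi$. Since $\Day^\otimes_{\cC,\cD} \to \cO^\otimes$ is already a fibration of operads by construction — it is the pullback of the operad map $(s,t) \colon (\Ar^\oplax)^\times \to \catinf^\times \times_{\Comm^\otimes} \catinf^\times$ along $(\cC,\cD)$ — the only nontrivial task is to produce these cocartesian lifts.

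The first step is to unwind Hinich's model into a concrete picture of morphisms over an active $\phi$. By the Segal condition for the cartesian symmetric monoidal structure on $\Ar^\oplax$, objects of $\Day^\otimes_{\cC,\cD}$ over $x_1 \boxtimes \cdots \boxtimes x_n$ are tuples $(F_i \colon \cC(x_i) \to \cD(x_i))_{i=1}^n$; and by the oplax-square description of morphisms in $\Ar^\oplax$ together with \cref{rem:aropl-straightened}, a morphism $(F_i)_i \to G$ lying over $\phi$ corresponds to a natural transformation
\[ \beta \colon \otimes^\cD_\phi \circ (F_1 \times \cdots \times F_n) \;\Rightarrow\; G \circ \otimes^\cC_\phi \]
of functors $\prod_i \cC(x_i) \to \cD(y)$.

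Guided by this dictionary, I would set $G := \mathrm{Lan}_{\otimes^\cC_\phi}\bigl(\otimes^\cD_\phi \circ (F_1 \times \cdots \times F_n)\bigr)$ and let $\beta$ be the unit of the Kan extension. Pointwise, $G(c)$ is the colimit over $\otimes^\cC_\phi/c \in \cK(y)$ of $\otimes^\cD_\phi(F_1(c_1), \ldots, F_n(c_n))$, which exists in $\cD(y)$ by assumption (1); hence $G$ is a well-defined functor $\cC(y) \to \cD(y)$ and $\beta$ determines a morphism $\mu \colon (F_i)_i \to G$ in $\Day^\otimes_{\cC,\cD}$ over $\phi$. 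That $\mu$ is cocartesian over $\phi$ itself is then immediate from the universal property of the left Kan extension.

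The principal work is verifying cocartesianness with respect to further compositions: for any active $\psi \in \mathrm{Mul}_\cO(\{y, z_j\}_j, w)$, any $K_j \colon \cC(z_j) \to \cD(z_j)$, and any $H$ over $w$, the map on mapping spaces induced by precomposition with $\mu$, from morphisms $(G, (K_j)_j) \to H$ over $\psi$ to morphisms $((F_i)_i, (K_j)_j) \to H$ over $\psi \circ (\phi \boxtimes \id)$, must be an equivalence. Unwinding both sides via the dictionary above reduces this to the identification
\[ \otimes^\cD_\psi\bigl(\mathrm{Lan}_{\otimes^\cC_\phi}(-),\, K_1,\,\ldots,\,K_m\bigr) \;\simeq\; \mathrm{Lan}_{\otimes^\cC_\phi \boxtimes \id}\bigl(\otimes^\cD_\psi(-,\, K_1,\,\ldots,\,K_m)\bigr), \]
which is exactly the content of assumption (2), namely that $\otimes^\cD_\psi$ commutes with $\cK(y)$-indexed colimits in its $\cD(y)$-slot. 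The main obstacle is the bookkeeping at this step: one must uniformly handle the trivial factorization $\psi = \id_y$ and arbitrary composites $\psi \circ (\phi \boxtimes \id)$ with untouched inputs $z_j$, and then confirm that the lifts so produced compose consistently inside the operad $\Day^\otimes_{\cC,\cD}$. Without assumption (2), cocartesian lifts would exist over each $\phi$ individually but could not assemble into a coherent operadic cocartesian transport.
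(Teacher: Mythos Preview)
Your proposal is correct and follows essentially the same route as the paper: identify morphisms over an active $\phi$ with natural transformations $\otimes^\cD_\phi \circ \prod_i F_i \Rightarrow G \circ \otimes^\cC_\phi$, construct the candidate cocartesian lift as the unit of the pointwise left Kan extension along $\otimes^\cC_\phi$ (using assumption~\eqref{prop:day-o-monoidal-1}), and verify cocartesianness by showing that the tensor functors $\otimes^\cD_\psi$ preserve these Kan extensions (using assumption~\eqref{prop:day-o-monoidal-2}). The only difference is organisational---the paper treats a general $\phi \colon x \to y$ with $y = y_1 \boxtimes \cdots \boxtimes y_l$ from the outset and checks cocartesianness against an arbitrary $\psi \colon y \to z$, whereas you restrict to single-output $\phi$ and absorb the multi-output case into the auxiliary inputs $z_j$ appearing in your verification step---but the substance is identical.
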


\begin{rem}
    The assumptions of \cref{prop:day-o-monoidal} are for example satisfied if there exists some regular cardinal $\kappa$ with the property that each $\cD(y)$ is $\kappa$-cocomplete, every tensor functor of $\cD$ preserves $\kappa$-small colimits in each variable, and each $\cC(y)$ is $\kappa$-small, reproducing \cite[Proposition~2.2.6.16]{HA}.
\end{rem}

\begin{proof}[Proof of \cref{prop:day-o-monoidal}]
	By construction, $\Day_{\cC,\cD}^\otimes$ is an operad, so \cite[Proposition~2.1.2.12]{HA} shows that we only have to check that $\Day_{\cC,\cD}^\otimes \to \cO^\otimes$ is a cocartesian fibration.
    As in \cite[Section~2.2.6]{HA}, the crucial part of the argument lies in identifying mapping anima in $\Day_{\cC,\cD}^\otimes$.
    
    We require some notation.
    Let $\pi \colon \Day_{\cC,\cD}^\otimes \to \cO^\otimes$ and $u \colon \Day_{\cC,\cD}^\otimes \to (\Ar^\oplax)^\times$ denote the projection functors, and abbreviate $\cX^\times := \catinf^\times \times_{\Comm^\otimes} \catinf^\times$.
    Note that then $\cX = \catinf \times \catinf$.
    Let $\phi \colon x \to y$ be a morphism in $\cO^\otimes$, let $F \in \pi^{-1}(x)$ and $G \in \pi^{-1}(y)$, and denote by $\Hom_{\Day_{\cC,\cD}^\otimes}^\phi(F,G)$ the anima of morphisms lying over $\phi$.
    Since $\Day_{\cC,\cD}^\otimes$ is defined as a pullback, we have a natural pullback square
    \[\begin{tikzcd}
		\Hom_{\Day_{\cC,\cD}^\otimes}^\phi(F,G)\ar[r]\ar[d] & \Hom_{(\Ar^\oplax)^\times}(uF,uG)\ar[d] \\
		*\ar[r, "{(\cC,\cD) \circ \phi}"] & \Hom_{\cX^\times}((s,t)F,(s,t)G)
	\end{tikzcd}\]
	In particular, the objects $sF$, $tF$, $sG$ and $tG$ are identified with $\cC(x)$, $\cD(x)$, $\cC(y)$ and $\cD(y)$, respectively.
	Denoting by $\alpha \colon \langle k \rangle \to \langle l \rangle$ the image of $\phi$ in $\Comm^\otimes$, the anima $\Hom_{\Day_{\cC,\cD}^\otimes}^\phi(F,G)$ sits in a natural fibre square
	\[\begin{tikzcd}
		\Hom_{\Day_{\cC,\cD}^\otimes}^\phi(F,G)\ar[r]\ar[d] & \Hom_{(\Ar^\oplax)^\times}^\alpha(uF,uG)\ar[d] \\
		*\ar[r, "{(\cC,\cD) \circ \phi}"] & \Hom_{\cX^\times}^\alpha((s,t)F,(s,t)G)
	\end{tikzcd}\]
	Since both $(s,t) \colon (\Ar^\oplax)^\times \to \cX^\times$ and $\cX^\times \to \Comm^\otimes$ are cocartesian fibrations, the right vertical map is identified with the map
	\[ \Hom_{(\Ar^\oplax)^\times_{\langle l \rangle}}^{\id_{\langle l \rangle}}(\alpha_!(uF),uG) \to \Hom_{\cX^\times_{\langle l \rangle}}^{\id_{\langle l \rangle}}(\alpha_!((s,t)F),(s,t)G) \]
	induced by $(s,t)$.
	Write $F = F_1 \boxtimes \ldots \boxtimes F_k$ and $G = G_1 \boxtimes \ldots \boxtimes G_l$.
	As both $\Ar^\oplax$ and $\cX$ carry the cartesian symmetric monoidal structure, this map is in turn identified with the map
	\begin{equation}\label{eq:day-o-monoidal-1}
		\prod_{j=1}^l \Hom_{\Ar^\oplax}\Big( \prod_{i \in \alpha^{-1}(j)} F_i, G_j \Big) \to \prod_{j=1}^l \Hom_{\cX}\Big( \prod_{i \in \alpha^{-1}(j)} (s,t)F_i, (s,t)G_j \Big)
	\end{equation}
	induced by $(s,t)$.
    Consequently, it suffices to consider the case that $\alpha \colon \langle k \rangle \to \langle 1 \rangle$ is an active morphism so that $\phi \in \mathrm{Mul}_\cO(\{x_i\}_i,y)$.
    
    With respect to the given identifications, the base point $(\cC,\cD) \circ \phi$ now becomes the point in
    \begin{align*}
     \Hom_{\cX}&\big(\prod_{i=1}^k (\cC(x_i), \cD(x_i)), (\cC(y),\cD(y)\big) \\
     &\simeq \Hom_\catinf\big(\prod_{i=1}^k \cC(x_i),\cC(y)\big) \times \Hom_\catinf\big(\prod_{i=1}^k \cD(x_i),\cD(y)\big)
    \end{align*}
    corresponding to the pair of multiplication functors $(\otimes_\phi^\cC,\otimes_\phi^\cD)$ of $\cC$ and $\cD$.

    By \cref{thm:main} and \cref{rem:aropl-straightened}, the fibre of \eqref{eq:day-o-monoidal-1} is identified with the anima of natural transformations
    \[ \Nat\big(\otimes_\phi^\cD \circ \prod_{i=1}^k \St(F_i), \St(G) \circ \otimes_\phi^\cC\big).\]
    Fix now $\phi \colon x \to y$ and $F \in \pi^{-1}(x)$.
    As before, let $\alpha \colon \langle k \rangle \to \langle l \rangle$ be the image of $\phi$ in $\Comm^\otimes$ and let $F = F_1 \boxtimes \ldots \boxtimes F_k$ be the canonical decomposition of $F$ with $F_i \in \Ar^\oplax$.
    Denote by $\phi_j \in \mathrm{Mul}_\cO(\{x_i\}_{i \in \alpha^{-1}(j)},y_j)$ the active morphisms determined by $\phi$ and $\alpha$.
    Using assumption~\eqref{prop:day-o-monoidal-1} and the pointwise formula for left Kan extensions, the composite
    \[ \prod_{i \in \alpha^{-1}(j)} \cC(x_i) \xrightarrow{\prod_i \St(F_i)} \prod_{i \in \alpha^{-1}(j)} \cD(x_i) \xrightarrow{\otimes_{\phi_j}^\cD} \cD(y_j) \]
    admits a left Kan extension $G_j$ along $\otimes_{\phi_j}^\cC \colon \prod_{i \in \alpha^{-1}(j)} \cC(x_i) \to \cC(y_j)$ for each $j \in \langle l \rangle$.
    As we have seen, the unit transformations
    \[ \eta_j \colon \otimes_{\phi_j}^\cD \circ \prod_{i \in \alpha^{-1}(j)} \St(F_i) \Rightarrow \St(G_j) \circ \otimes_{\phi_j}^\cC \]
    determine a point $\eta \in \Hom_{\Day_{\cC,\cD}^\otimes}^\phi(F,G)$, where $G := G_1 \boxtimes \ldots \boxtimes G_l$.
    We claim that $\eta$ is a cocartesian lift of $\phi$.

    This amounts to checking that for each $H \in \Day_{\cC,\cD}^\otimes$, the induced commutative square
    \[\begin{tikzcd}
        \Hom_{\Day_{\cC,\cD}^\otimes}(G,H)\ar[r, "- \circ \eta"]\ar[d] & \Hom_{\Day_{\cC,\cD}^\otimes}(F,H)\ar[d] \\
        \Hom_{\cO^\otimes}(y,z)\ar[r, "- \circ \phi"] & \Hom_{\cO^\otimes}(x,z)
    \end{tikzcd}\]
    is a pullback, where we set $z := \pi(H)$.
    This is equivalent to the assertion that for each $\psi \in \Hom_{\cO^\otimes}(y,z)$, the induced map on vertical fibres
    \begin{equation}\label{eq:day-o-monoidal-2}
        - \circ \eta \colon \Hom_{\Day_{\cC,\cD}^\otimes}^{\psi}(G,H) \to \Hom_{\Day_{\cC,\cD}^\otimes}^{\psi\phi}(F,H)    
    \end{equation}
    is an equivalence.
    Letting $\beta \colon \langle l \rangle \to \langle n \rangle$ denote the image of $\psi$ in $\Comm^\otimes$, the preliminary discussion and 
    \cref{thm:main} identify this map with the product of the maps
    \begin{align*}
        \Nat\Big(&\otimes_{\psi_m}^\cD \circ \prod_{j \in \beta^{-1}(m)} \big(\St(G_j) \circ \otimes_{\phi_j}^\cC\big), \St(H_m) \circ \otimes_{\psi_m}^\cC \circ \prod_{j \in \beta^{-1}(m)} \otimes_{\phi_j}^\cC\Big) \\
        &\xrightarrow{\eta_j^*} \Nat\Big( \otimes_{\psi_m}^\cD \circ \prod_{j \in \beta^{-1}(m)} \big(\otimes_{\phi_j}^\cD \circ \prod_{i \in \alpha^{-1}(j)} \St(F_i)\big), \St(H_m) \circ \otimes_{(\psi\phi)_m}^\cC\Big) \\
        &\simeq \Nat\Big( \otimes_{(\phi\psi)_m}^\cD \circ \prod_{i \in (\beta\alpha)^{-1}(m)} \St(F_i), \St(H_m) \circ \otimes_{(\psi\phi)_m}^\cC\Big).
    \end{align*}
    Using assumption~\eqref{prop:day-o-monoidal-2}, the pointwise formula for left Kan extensions implies that the transformation
    \[ \otimes_{\psi_m}^\cD \circ \prod_{j \in \beta^{-1}(m)} \big(\otimes_{\phi_j}^\cD \circ \prod_{i \in \alpha^{-1}(j)} \St(F_i)\big) \Rightarrow \otimes_{\psi_m}^\cD \circ \prod_{j \in \beta^{-1}(m)} \big(\St(G_j) \circ \otimes_{\phi_j}^\cC\big) \]
    induced by $\eta_j$ also exhibits $\otimes_{\psi_m}^\cD \circ \prod_{j \in \beta^{-1}(m)} \St(G_j)$ as a left Kan extension, so \eqref{eq:day-o-monoidal-2} is an equivalence.
\end{proof}

\begin{rem}
    In the situation of \cref{prop:day-o-monoidal}, assume that $\cO$ is a symmetric monoidal category, so that $\cC$ and $\cD$ correspond to lax symmetric monoidal functors $\cO \to \catinf$.
    Unwinding the proof of \cref{prop:day-o-monoidal}, one obtains the following description of the lax symmetric monoidal functor $\cO \to \catinf$ given by the straightening of $\Day_{\cC,\cD}^\otimes \to \cO^\otimes$:
    \begin{enumerate}
        \item The underlying functor $\cO \to \catinf$ sends $x \in \cO$ to $\Fun(\cC(x),\cD(x))$ and a morphism $f \colon x \to x'$ to the composite
        \[ \Fun(\cC(x),\cD(x)) \xrightarrow{f \circ -} \Fun(\cC(x),\cD(x')) \xrightarrow{f_!} \Fun(\cC(x'),\cD(x')), \]
        where $f_!$ denotes the left Kan extension functor.
        \item For $x, x' \in \cO$, the lax monoidal structure map is given by the composite
        \begin{align*}
            \Fun(\cC(x),\cD(x)) \times \Fun(\cC(x'),\cD(x'))
            &\to \Fun(\cC(x) \times \cC(x'),\cD(x) \times \cD(x')) \\
            &\xrightarrow{\otimes_\cD \circ -} \Fun(\cC(x) \times \cC(x'), \cD(x \otimes x')) \\
            &\xrightarrow{(\otimes_\cC)_!} \Fun(\cC(x \otimes x'),\cD(x \otimes x')),
        \end{align*}
        where the first arrow arises from the lax symmetric monoidal structure on $\Fun \colon \catinf^\op \times \catinf \to \catinf$, and $\otimes_\cC$ and $\otimes_\cD$ denote the tensor operations in $\cC$ and $\cD$, respectively.
        \item The structure map associated to the monoidal unit is given by the object in $\Fun(\cC(\mathbf{1}_\cO),\cD(\mathbf{1}_\cO))$ which arises as the left Kan extension of $* \xrightarrow{\mathbf{1}_\cD} \cD(\mathbf{1}_\cO)$ along $* \xrightarrow{\mathbf{1}_\cC} \cC(\mathbf{1}_\cO)$.
    \end{enumerate}
\end{rem}

\bibliographystyle{alpha}
\bibliography{day}

\end{document}